\newcommand{\set}[1]{\left\{#1\right\}}
\newcommand{\norm}[1]{\left\Vert #1 \right\Vert}
\newcommand{\abs}[1]{\left\vert #1 \right\vert}
\newcommand{\ra}{\rightarrow}
\newcommand{\Real}{\mathbb{R}}
\newcommand{\eps}{\varepsilon}
\newcommand{\diag}[1]{\text{diag}\left\{ #1 \right\}}
\newcommand{\E}[1]{\mathbb{E}\left[ #1 \right]}
\newcommand{\F}{\mathcal{F}}
\tikzset{
    place/.style={
        circle,
        thick,
        draw=black,
        fill=gray!50,
        minimum size=6mm,
    },
        state/.style={
        circle,
        thick,
        draw=blue!75,
        fill=blue!20,
        minimum size=6mm,
    },
}
\newcommand{\BlackBox}{\rule{1.5ex}{1.5ex}}  %
\newenvironment{proof}{\par\noindent{\bf Proof\ }}{\hfill\BlackBox\\[2mm]}
\begin{document}

\begin{frontmatter}

\title{On the Convergence of Reinforcement Learning\\ with Monte Carlo Exploring Starts\thanksref{footnoteinfo}} %

\thanks[footnoteinfo]{This paper was not presented at any IFAC 
meeting.}

\author{Jun Liu}\ead{j.liu@uwaterloo.ca}    %

\address{Department of Applied Mathematics\\
       University of Waterloo\\
       Waterloo, Ontario N2L 3G1, Canada\\}  %

\begin{keyword}                           %
Reinforcement Learning; Markov Decision Processes; Stochastic Control; Monte Carlo Exploring States; Optimistic Policy Iteration; Convergence; Stochastic Shortest Path Problem.
\end{keyword}                             %

\begin{abstract}                          %
A basic simulation-based reinforcement learning algorithm is the Monte Carlo Exploring States (MCES) method, also known as optimistic policy iteration, in which the value function is approximated by simulated returns and a greedy policy is selected at each iteration. The convergence of this algorithm in the general setting has been an open question. In this paper, we investigate the convergence of this algorithm for the case with undiscounted costs, also known as the stochastic shortest path problem. The results complement existing partial results on this topic and thereby helps further settle the open problem. As a side result, we also provide a proof of a version of the supermartingale convergence theorem commonly used in stochastic approximation. 
\end{abstract}

\end{frontmatter}

\section{Introduction}

Reinforcement learning has gained tremendous popularity in recent years \cite{sutton2018reinforcement}. Simulation-based methods for reinforcement learning or stochastic control have achieved notable success \cite{silver2009reinforcement}. One particularly simple simulation-based method, called Monte Carlo Exploring Starts (MCES), was introduced in detail in the classic book by Sutton and Barto \cite[Chapter 5]{sutton2018reinforcement}. In this method, the value function is estimated by the average simulated returns and the policy is updated using a greedy policy based on the current estimate of the value function. Because of its fundamental simplicity and importance, Sutton and Barto stated that the convergence of the MCES algorithm to the actual optimal value is ``one of the most fundamental open theoretical questions in reinforcement learning"  \cite[p.~99]{sutton2018reinforcement}. 

Partial results on convergence analysis of MCES exist in the literature. Most notably, Tsitsiklis \cite{tsitsiklis2002convergence} proved that MCES, which he termed optimistic policy iteration, converges under two assumptions. First, each state is selected for updating with the same frequency. Second, the problem is strictly discounted with a discount factor less than one. This result was extended to the undiscounted case by Chen \cite{chen2018convergence} under the assumption that all policies are proper (i.e., reaching a terminal state is inevitable under all policies). A more recent result by Wang and Ross \cite{wang2020convergence} proved convergence of MCES under the assumption of optimal policy feed-forward environments, where states cannot be revisited under an optimal policy. We note that the approach taken in \cite{wang2020convergence} mostly uses finite graph and probabilistic argument, whereas the approach in \cite{tsitsiklis2002convergence} (and also \cite{chen2018convergence}) is along the lines of stochastic approximation \cite{bertsekas1996neuro,robbins1951stochastic}. 

In this paper, we investigate the convergence of MCES/optimistic policy iteration in the undiscounted case without the assumption of optimal policy feed-forward environments and without the assumption that all polices are proper. Compared with the results in \cite{tsitsiklis2002convergence,chen2018convergence,wang2020convergence}, we consider both uniform and nonuniform exploring starts. In the uniform case, we extend the results of Tsitsiklis \cite{tsitsiklis2002convergence} to the undiscounted, i.e., stochastic shortest path problem. 
For the case that all policies are proper, our proof differs from that in \cite{chen2018convergence} and is closer in spirit to that of \cite{tsitsiklis2002convergence} (see how Lemma \ref{lem:main} generalizes Lemma 2 in \cite{tsitsiklis2002convergence}). We also discuss how to work around the proper policy assumption. In the nonuniform case, we argue that the choice of stepsize should be component-dependent to agree with the classical version of MCES discussed in \cite{sutton2018reinforcement}. 
We believe the convergence results established here could help further settle the long-standing open problem. As a side result, we also prove a version of the  supermartingale convergence theorem that is commonly used in the literature of stochastic approximation, whose proof, however, is not available in classic books such as \cite{bertsekas1996neuro}. Furthermore, we 
provide an alternative and hopefully more direct treatment of stochastic approximations directly based on the supermartingale martingale convergence theorem (cf. Chapter 4 of the classic book \cite{bertsekas1996neuro}), which may be of independent interest. 

The paper is organized as follows. In Section \ref{sec:pre}, we present the problem formulation and the preliminaries for proving the convergence of MCES/optimistic policy iteration. In Section \ref{sec:main}, we present the convergence proof for the case that all policies are proper. We discuss the case without the proper policy assumption in Section \ref{sec:improper} and the case with nonuniform initial exploring in Section \ref{sec:nonuniform}. A simple illustrative example is presented in Section \ref{sec:ex}. Some concluding remarks are presented in Section \ref{sec:con}. The Appendix includes a self-contained treatment of supermartingale convergence and stochastic approximation results. 

\section{Problem formulation and preliminaries}\label{sec:pre}

\subsection{Markov decision problem}

Let $M=(S,A,P)$ be a \textit{Markov decision process}, where $S=\set{1,\cdots,n}$ is a finite set of states, $A$ is a finite set of 
actions, and $P:\,S\times A\times S\ra[0,1]$ is a transition probability function. For each action $a\in A$, we can represent $P(\cdot,a,\cdot)$ as a matrix $P(a)$ whose entries $P_{ij}(a)$ satisfy 
$$
P_{ij}(a)=P(i,a,j)=\mathbb{P}(s_{t+1}=j|s_t=i,a_t=a), 
$$ 
where $\set{(s_t,a_t)_{t=0}^{t=\infty}}\subseteq S\times A$ is an evolution of the MDP $M$. In words, $P_{ij}(a)$ denotes the probability of having a transition from the state $i$ to the state $j$ under the action $a$. 

A \textit{policy} is a function $\mu:\,S\ra A$. Clearly, the set of all policies is finite. We denote this set by $\Pi$. Given a policy $\mu\in \Pi$, we define the cost-to-go value\footnote{We use a cost function formulation as commonly seen in stochastic control, which is equivalent to a reward function formulation in reinforcement learning, albeit the difference of using minimization in place of maximization for values.} of the policy starting from a state $i$ as
$$
J^\mu (i) = \mathbb{E}\left[ \sum_{t=0}^\infty \alpha^t g(s_t,\mu(s_t)) | s_0=i \right],
$$
where $\set{s_t}_{t=0}^\infty$ is a state evolution under the policy $\mu$, $g:\,S\times A\ra\Real$ is the stage cost, and $\alpha\in[0,1]$ is a discount factor. The optimal cost-to-go value $J^*$ is defined as
$$
J^*(i) = \min_{\mu\in\Pi} J^{\mu}(i). 
$$
Since the set of policies is finite, the optimal value is always attainable by an optimal policy. That is, there exists $\mu^*\in\Pi$ such that $J^{\mu^*} = J^*$. A Makov decision problem often is concerned with finding the optimal value $J^*$ and an optimal policy $\mu^*$ (which may not be unique). 

We will primarily be focusing on the so-called \textit{stochastic shortest path problem} in this paper, i.e. the Markov decision problem above with $\alpha=1$. To make the cost-to-go value well-defined, we assume that there exists a terminal state, denoted by 0, and modify the transition probability function to satisfy $\sum_{j=1}^nP_{ij}(a)\le 1$ and $P_{i0}(a)=1-\sum_{j=1}^nP_{ij}(a)$ for all $i\in S$ and $a\in A$. In addition, the terminal state is assumed to be a trap state in the sense that $P_{00}(a)= 1$ and $P_{0j}(a)= 0$ for all $a\in A$ and $j\in S$. We also assume $g(0,a)=0$ for all $a\in A$ such that $J^\mu(0)=0$ for all $\mu\in\Pi$. Hence we do not need to discuss the value at state $0$. 

\subsection{Dynamic programming operators} 

We define two dynamic programming operators $T_{\mu}:\,\Real^n\ra\Real^n$ and $T:\,\Real^n\ra\Real^n$ as follows. Given $J\in\Real^n$ and $\mu\in\Pi$, let 
\begin{equation}\label{eq:tmu}
T^{\mu}J(i) = g(i,\mu(i)) + \alpha\sum_{j=1}^n P_{ij}(\mu(i))J(j),    
\end{equation}
and
\begin{equation}\label{eq:t}
TJ(i) = \min_{a\in A} \left[g(i,a) + \alpha\sum_{j=1}^n P_{ij}(a)J(j)\right].
\end{equation}
For convenience, we can write (\ref{eq:tmu}) in a vector format as 
$$
T_\mu J = g_\mu  + \alpha P_{\mu}J,
$$
where $g_\mu=[g(1,\mu(1))\;g(2,\mu(2))\;\cdots\;g(n,\mu(n))]^T\in\Real^n$ and $P_{\mu}=(P_{ij}(\mu(i)))\in\Real^{n\times n}$. %
It follows that, for each $J\in\Real^n$, there exists $\mu\in\Pi$ such that 
$$
T J =  T_\mu J. 
$$
Such a policy is called a \textit{greedy policy} corresponding to $J$. 

\subsection{Optimistic policy iteration with Monte Carlo policy evaluation}

Following \cite{tsitsiklis2002convergence}, we can write the main procedure of optimistic policy iteration using Monte Carlo simulations for policy evaluation as 
\begin{equation}\label{eq:J}
J_{t+1} = (1-\gamma_t) J_t + \gamma_t (J^{\mu_t}+w_t),
\end{equation}
where $J_t$ is the current value vector, $\gamma_t$ is a scalar stepsize parameter (time-varying but deterministic), and $J^{\mu_t}$ is the expected cost value of the current policy $\mu_t$. Given the current value $J_t$, a greedy policy $\mu_t$ is chosen according to
\begin{equation}\label{eq:mu}
T_{\mu_t} J_t = T J_t. 
\end{equation}
The noise $w_t$ captures the discrepancy between the expected cost $J^{\mu_t}$ and observed cumulative cost $J^{\mu_t}+w_t$. Let $\mathcal{F}_t$ be the natural filtration generated by the process (\ref{eq:J}). Since the observed cumulative cost gives an unbiased estimate $J^{\mu_t}$, we have
$
\mathbb{E}[w_t|\mathcal{F}_t] = 0. 
$
Furthermore, the variance of $w_t$ (conditioned on $\mathcal{F}_t$) is only a function of the initial state and the current policy $\mu_t$. Because the numbers of states and polices are finite, we also have
$
\mathbb{E}[\norm{w_t}^2|\mathcal{F}_t] \le C,
$
for some constant $C$. 

\subsection{Preliminaries}

We present some technical preliminaries for convergence analysis. We focus on the shortest path problem (i.e. $\alpha=1$).  A policy $\mu\in\Pi$ is said to be proper if the terminal state $0$ is reached with probability 1 from any initial state. 
\begin{assum}\label{as:proper}
All policies in $\Pi$ are proper. 
\end{assum}

\begin{assum}\label{as:step}
The stepsize parameter satisfies 
$
\sum_{t=0}^{\infty} \gamma_t=\infty
$
and
$
\sum_{t=0}^{\infty} \gamma_t^2<\infty. 
$
\end{assum}

Based on Assumption \ref{as:proper}, a well-known result is that the dynamic programming operators $T$ and $T_\mu$ are contractive with respect to a 
weighted maximum norm.

\begin{lem}\cite[Proposition 2.2, p.~23]{bertsekas1996neuro}\label{lem:contraction}
If Assumption \ref{as:proper} holds, then there exists some $\beta\in [0,1)$ and a vector $\theta\in\Real^n$ of positive components such that
$$
\sum_{j=1}^nP_{ij}(a)\theta(j)\le \beta\theta(i),\quad \forall i\in S,\quad \forall a\in A.
$$
In particular, this statement implies that
$$
\norm{T_{\mu}J_1-T_{\mu}J_2}_{\theta} \le \beta \norm{J_1-J_2}_{\theta},\quad \forall \mu\in\Pi, \forall J_1,J_2\in\Real^n,
$$
and
$$
\norm{TJ_1-TJ_2}_{\theta} \le \beta \norm{J_1-J_2}_{\theta},\quad \forall J_1,J_2\in\Real^n,
$$
where the weighted maximum norm $\norm{\cdot}_{\theta}$ is defined by $\norm{J}_\theta=\max_{1\le i\le n}\frac{\abs{J(i)}}{\theta(i)}.$ 
\end{lem}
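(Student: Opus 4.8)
The plan is to obtain the weight vector $\theta$ as the value function of an auxiliary Markov decision problem --- ``maximize the expected time until the terminal state $0$ is reached'' --- and then take $\beta = 1 - 1/\max_i\theta(i)$. Precisely, define $\bar T:\Real^n\ra\Real^n$ by $\bar T V(i) = 1 + \max_{a\in A}\sum_{j=1}^n P_{ij}(a)V(j)$ and let $\theta = \lim_{k\to\infty} V_k$, where $V_0 = 0$ and $V_{k+1} = \bar T V_k$. Since $\bar T$ is monotone and $V_1 = \mathbf{1} \ge V_0$, the iterates $V_k$ are nondecreasing; if they are bounded above then $\theta$ is a finite fixed point, $\theta = \bar T\theta$, with $\theta(i)\ge V_1(i) = 1 > 0$, so $\theta$ has positive components. (One can check that $\theta(i) = \max_{\mu\in\Pi}\mathbb{E}_\mu[\tau\mid s_0 = i]$, where $\tau$ is the first hitting time of state $0$.)

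Granting that $\theta$ is well defined, the rest is routine. The fixed-point identity gives, for every $i\in S$ and $a\in A$, $\theta(i) = 1 + \max_{a'\in A}\sum_{j=1}^n P_{ij}(a')\theta(j) \ge 1 + \sum_{j=1}^n P_{ij}(a)\theta(j)$, hence $\sum_{j=1}^n P_{ij}(a)\theta(j) \le \theta(i) - 1 \le \theta(i)\bigl(1 - 1/\max_k\theta(k)\bigr) = \beta\theta(i)$, and $\beta\in[0,1)$ because $1\le\theta(k)<\infty$. The two norm estimates then follow by the usual calculation: for any $\mu\in\Pi$, $\abs{T_\mu J_1(i) - T_\mu J_2(i)} \le \sum_{j=1}^n P_{ij}(\mu(i))\theta(j)\,\abs{J_1(j)-J_2(j)}/\theta(j) \le \beta\,\theta(i)\,\norm{J_1-J_2}_\theta$; dividing by $\theta(i)$ and maximizing over $i$ gives the contraction for $T_\mu$, and the bound for $T$ is obtained the same way after using $\abs{\min_a f(a)-\min_a h(a)}\le\max_a\abs{f(a)-h(a)}$.

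The only real work --- and the main obstacle --- is showing that the iterates $V_k$ stay bounded, i.e.\ that $\theta$ is finite. The quantitative fact needed is that there exist a finite $m$ and $\rho<1$ with $\sup_\pi\mathbb{P}_\pi(\tau>m\mid s_0=i)\le\rho$ for all $i\in S$, where the supremum is over all (possibly history-dependent) policies $\pi$: granting this, the identity $\min(\tau,k+m) = \min(\tau,m) + \mathbf{1}\{\tau>m\}\min(\tau-m,k)$ together with conditioning on the state at time $m$ yields $\max_i V_{k+m}(i)\le m + \rho\max_i V_k(i)$, which keeps the $V_k$ bounded. To get this uniform reachability bound it does not suffice to note that $\mathbb{P}_\mu(\tau>m\mid s_0=i)\to 0$ for each fixed proper $\mu$; instead, set $\underline V_\infty(i) = \lim_{k\to\infty}\inf_\pi\mathbb{P}_\pi(\tau\le k\mid s_0=i)$, which satisfies $\underline V_\infty(i) = \min_{a}\bigl(P_{i0}(a) + \sum_{j\in S}P_{ij}(a)\underline V_\infty(j)\bigr)$. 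Putting $W = \mathbf{1} - \underline V_\infty$ turns this into $W(i) = \max_a\sum_{j\in S}P_{ij}(a)W(j)$ with $0\le W\le\mathbf{1}$; choosing a stationary policy $\mu$ attaining these maxima gives $W = P_\mu W$, so $W$ is a fixed point of $P_\mu$. But $\mu$ is proper by Assumption \ref{as:proper}, so $P_\mu$ has spectral radius strictly less than $1$, forcing $W = 0$; hence $\underline V_\infty\equiv\mathbf{1}$, which supplies the required $m$ and $\rho$. (This is Proposition 2.2 in \cite{bertsekas1996neuro}; the argument above simply makes explicit the classical finiteness step on which it rests.)
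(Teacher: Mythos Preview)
The paper does not prove this lemma; it is quoted verbatim as Proposition~2.2 of \cite{bertsekas1996neuro} and used as a black box. Your proposal therefore supplies something the paper omits, and the argument you give is correct. The construction of $\theta$ as the maximal expected hitting time of the terminal state is exactly the one underlying the cited reference: the fixed-point identity $\theta(i)=1+\max_a\sum_jP_{ij}(a)\theta(j)$ immediately yields $\sum_jP_{ij}(a)\theta(j)\le\theta(i)-1\le\beta\theta(i)$ with $\beta=1-1/\max_k\theta(k)$, and the contraction estimates for $T_\mu$ and $T$ follow as you indicate.

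The finiteness step you single out as the ``main obstacle'' is handled cleanly. Your argument that $W=\mathbf{1}-\underline V_\infty$ satisfies $W=P_\mu W$ for a stationary $\mu$, combined with properness of $\mu$ (which forces $P_\mu^k\to0$, hence $W=P_\mu^kW\to0$), correctly gives $\underline V_\infty\equiv\mathbf{1}$ and hence the uniform bound $\sup_\pi\mathbb{P}_\pi(\tau>m\mid s_0=i)\le\rho<1$. The renewal-type inequality $\max_iV_{k+m}(i)\le m+\rho\max_iV_k(i)$ then bounds the iterates, and the monotone limit $\theta$ is the desired fixed point. In short: your proof is sound and makes explicit what the paper leaves to the citation.
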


Let $\theta\in\Real^n$ be a vector of positive components. Define $\Theta=\diag{\theta(1),\theta(2),\cdots,\theta(n)}$. Let $\mathbf{1}\in\Real^n$ be the column vector with all components equal to 1. The above lemma shows that, in matrix form, 
\begin{equation}\label{eq:one}
P_\mu\Theta \mathbf{1} \le \beta\Theta \mathbf{1}, \quad \forall \mu\in \Pi, 
\end{equation}
where the inequality is interpreted component-wise\footnote{In the sequel, all vector inequalities are interpreted component-wise.}. We refer to this as a weighted contractive property for $P_\mu$. 

We also recall the following property on the dynamical programming parameters $T$ and $T_{\mu}$ for a stochastic shortest path problem. 

\begin{lem}\cite[Lemma 2.2, p.~21]{bertsekas1996neuro}
For every scalar $c\ge 0$, $J\in\Real^n$, and $\mu\in\Pi$, we have 
\begin{equation}\label{eq:error}
T (J+c\mathbf{1}) \le T J + c \mathbf{1},\quad T_{\mu} (J+c\mathbf{1}) \le T_{\mu} J + c \mathbf{1},
\end{equation}
where $c$ is any nonnegative scalar. If $c$ is negative, then the inequalities are reversed. 
\end{lem}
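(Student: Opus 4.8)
The plan is to verify both inequalities by a direct computation; the entire content is the structural fact that in the stochastic shortest path formulation probability mass may leak to the terminal state $0$, so that the row sums over the non-terminal states obey $0 \le \sum_{j=1}^n P_{ij}(a) \le 1$ for every $i \in S$ and $a \in A$.

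For $T_\mu$: fix $i \in S$, $\mu \in \Pi$, $J \in \Real^n$, and a scalar $c$. Since $\alpha = 1$, expanding the definition (\ref{eq:tmu}) and using linearity gives
\[
T_\mu(J + c\mathbf{1})(i) = T_\mu J(i) + c\sum_{j=1}^n P_{ij}(\mu(i)).
\]
When $c \ge 0$ the term $c\sum_{j=1}^n P_{ij}(\mu(i))$ is at most $c$ because the row sum lies in $[0,1]$, and when $c < 0$ it is at least $c$ for the same reason; since $i$ is arbitrary, the componentwise inequalities for $T_\mu$ follow, with the stated reversal in the negative case.

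For $T$: the same expansion inside the minimization in (\ref{eq:t}) gives, for each $a \in A$,
\begin{align*}
g(i,a) &+ \sum_{j=1}^n P_{ij}(a)\big(J(j)+c\big)\\
&= \Big(g(i,a) + \sum_{j=1}^n P_{ij}(a)J(j)\Big) + c\sum_{j=1}^n P_{ij}(a).
\end{align*}
For $c \ge 0$ I would bound the last term by the constant $c$ \emph{before} taking $\min_{a\in A}$ on both sides, and then use that adding a constant commutes with the minimum, obtaining $T(J + c\mathbf{1})(i) \le TJ(i) + c$; for $c < 0$ the same row-sum bound points the other way and yields the reversed inequality.

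There is no genuinely hard step here; the only point that takes a moment of care is the $T$ case, where the $a$-dependent quantity $c\sum_{j=1}^n P_{ij}(a)$ must be replaced by the uniform bound $c$ before the minimization, not after it. Note also that the argument uses only the probability-leakage structure of the formulation and not Assumption \ref{as:proper}.
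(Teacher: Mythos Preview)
The paper does not supply its own proof of this lemma; it is quoted from \cite{bertsekas1996neuro} and stated without argument. Your direct computation is correct and is exactly the standard proof: expand $T_\mu$ linearly, use $0\le\sum_{j=1}^n P_{ij}(a)\le 1$, and for $T$ bound the $a$-dependent remainder by the constant $c$ before minimizing. Your remark that Assumption~\ref{as:proper} is not needed is also accurate. The only stylistic alternative, visible in the paper's proof of the neighboring Lemma~\ref{lem:error}, is to handle $T$ by picking a greedy $\mu$ for $J$ and writing $T(J+c\mathbf{1})\le T_\mu(J+c\mathbf{1})=T_\mu J + cP_\mu\mathbf{1}\le TJ+c\mathbf{1}$; this avoids the ``bound before minimizing'' step but is equivalent in substance.
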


We can also prove a slight modification of the above lemma using (\ref{eq:one}). 

\begin{lem}\label{lem:error}
Suppose that Assumption \ref{as:proper} holds. For every scalar $c\ge 0$, $J\in\Real^n$, and $\mu\in\Pi$, we have 
\begin{equation*}
T (J+c\Theta \mathbf{1}) \le T J + \beta c \Theta  \mathbf{1},\quad T_{\mu} (J+c\Theta \mathbf{1}) \le T_{\mu} J + \beta c \Theta \mathbf{1},
\end{equation*}
where $c$ is any nonnegative scalar. If $c$ is negative, then the inequalities are reversed. 
\end{lem}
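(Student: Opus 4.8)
The plan is to verify both inequalities by direct substitution, using the weighted contractive property (\ref{eq:one}) from Lemma \ref{lem:contraction} in place of the row-substochasticity bound $\sum_j P_{ij}(a)\le 1$ that underlies (\ref{eq:error}). The case $c\ge 0$ and the case $c<0$ differ only by a sign reversal, so I would prove the former in detail and then remark that the latter follows from the identical computation with every inequality flipped.

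First I would dispatch $T_\mu$, which is the easier of the two. Writing $T_\mu(J+c\Theta\mathbf{1}) = g_\mu + P_\mu J + c\,P_\mu\Theta\mathbf{1}$ and invoking (\ref{eq:one}), namely $P_\mu\Theta\mathbf{1}\le\beta\Theta\mathbf{1}$, together with $c\ge 0$, gives $c\,P_\mu\Theta\mathbf{1}\le\beta c\,\Theta\mathbf{1}$ componentwise; adding $g_\mu + P_\mu J = T_\mu J$ to both sides yields the claim.

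Next, for $T$, I would argue componentwise. Fixing a state $i$, I expand $T(J+c\Theta\mathbf{1})(i) = \min_{a\in A}\bigl[\,g(i,a) + \sum_{j=1}^n P_{ij}(a)J(j) + c\sum_{j=1}^n P_{ij}(a)\theta(j)\,\bigr]$. For each fixed action $a$, Lemma \ref{lem:contraction} gives $\sum_j P_{ij}(a)\theta(j)\le\beta\theta(i)$, hence $c\sum_j P_{ij}(a)\theta(j)\le\beta c\,\theta(i)$ since $c\ge 0$; thus the bracketed quantity is, for every $a$, no larger than $g(i,a) + \sum_j P_{ij}(a)J(j) + \beta c\,\theta(i)$. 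Taking the minimum over $a$ on both sides — using only that $\min$ is monotone, i.e.\ a pointwise inequality between two functions of $a$ implies the same inequality between their minima — and pulling the constant $\beta c\,\theta(i)$ out of the minimum gives $T(J+c\Theta\mathbf{1})(i)\le TJ(i) + \beta c\,\theta(i)$, which is the $i$-th component of the asserted vector inequality.

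For negative $c$, multiplying $\sum_j P_{ij}(a)\theta(j)\le\beta\theta(i)$ (respectively $P_\mu\Theta\mathbf{1}\le\beta\Theta\mathbf{1}$) by $c$ reverses the inequality, and the two computations above go through verbatim with $\le$ replaced by $\ge$, again using monotonicity of $\min$. I do not expect a genuine obstacle here; the only two points that deserve a word of care are (i) that one must use the weighted bound (\ref{eq:one}) rather than $\sum_j P_{ij}(a)\le 1$ — this is exactly what makes the bound here tighter (factor $\beta$) than the one in (\ref{eq:error}) — and (ii) keeping track of the direction of the inequality, and of the monotonicity of $\min$, in the $c<0$ case.
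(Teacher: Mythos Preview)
Your proof is correct and rests on the same key ingredient as the paper, namely the weighted contractive bound (\ref{eq:one}). The only stylistic difference is in how you handle $T$: you expand the minimum componentwise and push the bound through action by action, whereas the paper picks a greedy policy $\mu$ for $J$ so that $T(J+c\Theta\mathbf{1})\le T_\mu(J+c\Theta\mathbf{1})$ and $T_\mu J = TJ$, reducing the $T$ case to the already-established $T_\mu$ case in one line; both routes are equally valid and equally short.
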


\begin{proof}
Let $\mu$ be a greedy policy corresponding to $J$, i.e., $T_{\mu} J = TJ$.  By (\ref{eq:one}), we have 
\begin{align*}
T (J+c\Theta \mathbf{1})& \le T_\mu(J+c\Theta \mathbf{1})= g_\mu + P_\mu(J+c\Theta \mathbf{1})\\
&= g_\mu + P_\mu J + P_\mu c\Theta \mathbf{1}  = T_\mu J + P_\mu c\Theta \mathbf{1} \\
& \le  T_\mu J + \beta c \Theta \mathbf{1} =  T J + \beta c \Theta \mathbf{1}. 
\end{align*}
The above also shows $T_{\mu} (J+c\Theta \mathbf{1}) \le T_{\mu} J + \beta c \Theta \mathbf{1}$ for any $\mu\in\Pi$. 
\end{proof}

By the contraction mapping theorem, Lemma \ref{lem:contraction} implies the following convergence result. 

\begin{lem}\cite{denardo1967contraction,bertsekas1996neuro}\label{lem:exist}
If Assumption \ref{as:proper} holds, we have, for every  $J\in\Real^n$ and $\mu\in\Pi$, 
$$
\lim_{t\ra\infty} T^t J = J^*,\quad \lim_{t\ra\infty} T^t_{\mu} J = J^{\mu},
$$
where $J^*$ and $J^\mu$ are the unique fixed points of $T$ and $T_{\mu}$, respectively. 
\end{lem}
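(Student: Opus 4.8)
The plan is to invoke the contraction mapping (Banach fixed point) theorem. Lemma~\ref{lem:contraction} already gives, under Assumption~\ref{as:proper}, that $T$ and every $T_\mu$ are contractions of modulus $\beta\in[0,1)$ in the weighted maximum norm $\norm{\cdot}_\theta$; since $\Real^n$ with any norm is a complete metric space (being finite-dimensional, all norms being equivalent), the Banach fixed point theorem yields that each of $T$ and $T_\mu$ has a unique fixed point, say $\tilde J$ and $\tilde J^\mu$, and that $\norm{T^tJ-\tilde J}_\theta\le\beta^t\norm{J-\tilde J}_\theta\to 0$ and $\norm{T_\mu^tJ-\tilde J^\mu}_\theta\le\beta^t\norm{J-\tilde J^\mu}_\theta\to 0$ for every $J\in\Real^n$. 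The remaining work is to identify $\tilde J^\mu$ with $J^\mu$ and $\tilde J$ with $J^*$.

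For a fixed $\mu$, I would first note that properness makes the series defining $J^\mu(i)$ absolutely convergent (the stage costs are bounded and the probability of not yet having reached $0$ decays geometrically), so $J^\mu\in\Real^n$ is well-defined; conditioning on the first transition then gives the one-step identity $J^\mu(i)=g(i,\mu(i))+\sum_{j=1}^nP_{ij}(\mu(i))J^\mu(j)$, i.e.\ $T_\mu J^\mu=J^\mu$, so uniqueness of the fixed point forces $\tilde J^\mu=J^\mu$ and the second limit follows.

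For $T$, I would argue in two directions. Choosing a greedy policy $\nu$ for $\tilde J$ gives $\tilde J=T\tilde J=T_\nu\tilde J$, hence $\tilde J=J^\nu\ge J^*$ by the previous step. Conversely, for an arbitrary $\mu\in\Pi$ we have $T\tilde J\le T_\mu\tilde J$ (as $T$ is a pointwise minimum over actions), so $\tilde J\le T_\mu\tilde J$; iterating the monotone map $T_\mu$ (monotonicity being immediate from $P_\mu\ge 0$) gives $\tilde J\le T_\mu^t\tilde J$ for all $t$, and letting $t\to\infty$ yields $\tilde J\le J^\mu$. Since $\mu$ was arbitrary, $\tilde J\le\min_{\mu\in\Pi}J^\mu=J^*$, so $\tilde J=J^*$ and the first limit follows.

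The only genuinely delicate point is the identification $\tilde J=J^*$; the completeness needed for the Banach theorem is automatic in finite dimensions, and the Bellman identity $T_\mu J^\mu=J^\mu$ is a routine conditioning argument. I therefore expect no real obstacle here — this is essentially the classical argument of Denardo and of Bertsekas--Tsitsiklis — so the proposal is simply to assemble these pieces cleanly from the lemmas already established.
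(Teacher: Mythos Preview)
Your proposal is correct and follows exactly the route the paper indicates: the paper gives no detailed proof but simply remarks that the lemma follows from the contraction mapping theorem applied to Lemma~\ref{lem:contraction}, deferring to \cite{denardo1967contraction,bertsekas1996neuro}. Your write-up merely supplies the details those references contain --- in particular the identification of the Banach fixed points with the previously defined $J^\mu$ and $J^*$ --- so there is nothing to add.
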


The next lemma is a modified version of Lemma 2 in \cite{tsitsiklis2002convergence}. Let $\theta\in\Real^n$ and $\Theta=\text{diag}\{\theta(1),\theta(2),\cdots,\theta(n)\}$ be defined above. For the sequence $\set{J_t}_{t=0}^{\infty} \subset \Real^n$, define 
$$
c_t = T J_t - J_t,\quad \lambda_t=\max(c_t,0),\quad t\ge 0, 
$$
where $\max$ is taken component-wise. Then clearly $\lambda_t$  is a nonnegative vector and $c_t\le \lambda_t$. 

\begin{lem}\label{lem:main}
Suppose that Assumption \ref{as:proper} holds. 
For every $t\ge 0$, we have
\begin{enumerate}
\item $T_{\mu_t}^k J_t\le J_t + \frac{\norm{\Theta^{-1}\lambda_t}_{\infty} \Theta\mathbf{1}}{1-\beta}$, for all $k\ge 1$, 
\item $J^{\mu_t}\le J_t + \frac{\norm{\Theta^{-1}\lambda_t}_{\infty} \Theta\mathbf{1}}{1-\beta}$, 
\item $J^{\mu_t}\le T J_t + \frac{\beta\norm{\Theta^{-1}\lambda_t}_{\infty} \Theta\mathbf{1}}{1-\beta}$. 
\end{enumerate}
\end{lem}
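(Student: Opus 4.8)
The plan is to establish the three bounds in sequence, the heart of the matter being a geometric-series induction for part (1). First I would record the elementary but crucial observation that, by the definition of the weighted maximum norm, $\lambda_t \le \norm{\Theta^{-1}\lambda_t}_{\infty}\,\Theta\mathbf{1}$ component-wise; write $d_t := \norm{\Theta^{-1}\lambda_t}_{\infty}$ for brevity. Since $\mu_t$ is a greedy policy for $J_t$ we have $T_{\mu_t}J_t = TJ_t = J_t + c_t \le J_t + \lambda_t \le J_t + d_t\Theta\mathbf{1}$, which already gives part (1) for $k=1$ because $1/(1-\beta)\ge 1$. For the inductive step I would apply the monotone operator $T_{\mu_t}$ to the hypothesis $T_{\mu_t}^k J_t \le J_t + a_k\Theta\mathbf{1}$ and invoke the $T_\mu$ half of Lemma~\ref{lem:error} (valid because $a_k\ge 0$) together with the base case to obtain $T_{\mu_t}^{k+1}J_t \le T_{\mu_t}J_t + \beta a_k\Theta\mathbf{1} \le J_t + (d_t + \beta a_k)\Theta\mathbf{1}$; choosing $a_k = d_t(1-\beta^k)/(1-\beta)$ makes the recursion $a_{k+1} = d_t + \beta a_k$ self-consistent (with $a_1 = d_t$), and since $a_k \le d_t/(1-\beta)$ for every $k$, part (1) follows.

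Part (2) is then immediate: letting $k\to\infty$ and using Lemma~\ref{lem:exist} (which gives $T_{\mu_t}^k J_t \to J^{\mu_t}$), the bound in part (1) passes to the limit, and in fact $a_k \uparrow d_t/(1-\beta)$. For part (3) I would avoid a second induction by exploiting that $J^{\mu_t}$ is the fixed point of $T_{\mu_t}$ while $TJ_t = T_{\mu_t}J_t$: since $T_{\mu_t}$ acts affinely through $P_{\mu_t}$, one has $J^{\mu_t} - TJ_t = T_{\mu_t}J^{\mu_t} - T_{\mu_t}J_t = P_{\mu_t}(J^{\mu_t} - J_t)$. Applying the nonnegative (hence monotone) matrix $P_{\mu_t}$ to the bound from part (2) and then the weighted contraction property~(\ref{eq:one}) yields $J^{\mu_t} - TJ_t \le \tfrac{d_t}{1-\beta}\,P_{\mu_t}\Theta\mathbf{1} \le \tfrac{\beta d_t}{1-\beta}\Theta\mathbf{1}$, which is part (3).

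I do not expect a serious obstacle here — the argument is essentially bookkeeping with component-wise inequalities. The two points that require a little care are, first, consistently converting the scalar $\norm{\Theta^{-1}\lambda_t}_{\infty}$ into the vector inequality $\lambda_t \le \norm{\Theta^{-1}\lambda_t}_{\infty}\Theta\mathbf{1}$ and propagating component-wise inequalities through the induction without sign errors; and second, noticing that part (3) is best obtained not by re-running the induction starting from $TJ_t$ but by collapsing $J^{\mu_t} - TJ_t$ to $P_{\mu_t}$ applied to the part-(2) estimate, after which (\ref{eq:one}) supplies the extra factor $\beta$.
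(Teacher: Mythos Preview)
Your proposal is correct and follows essentially the same route as the paper: the paper derives the exact identity $T_{\mu_t}^k J_t = J_t + \sum_{m=0}^{k-1}P_{\mu_t}^m c_t$ and then bounds each term via the weighted contraction, whereas you run the same geometric-series induction one level higher by invoking Lemma~\ref{lem:error} at each step, which is a purely cosmetic repackaging of the same estimate. Likewise for part~(3), your computation $J^{\mu_t}-TJ_t = P_{\mu_t}(J^{\mu_t}-J_t)$ followed by~(\ref{eq:one}) is exactly what the paper obtains by applying $T_{\mu_t}$ to part~(2) and citing Lemma~\ref{lem:error}.
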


\begin{proof}
Note that $\norm{\Theta^{-1}\lambda_t}_{\infty}$ is the weighted maximum norm of $\lambda_t$ with respect to the vector $\theta$. From (\ref{eq:mu}), we have $T_{\mu_t} J_t = T J_t$. It follows that $T_{\mu_t} J_t = J_t + c_t$. Applying $T_{\mu_t}$ to both sides of this equation gives
\begin{align*}
T_{\mu_t}^2 J_t & = T_{\mu_t}(J_t + c_t)  = g_{\mu_t} + P_{\mu_t}(J_t + c_t) \\
& =  T_{\mu_t} J_t + P_{\mu_t}c_t = J_t + c_t + P_{\mu_t}c_t.     
\end{align*}
By induction, we obtain
\begin{equation}\label{eq:tmuk}
T_{\mu_t}^k J_t= J_t + (I+P_{\mu_t}+P_{\mu_t}^2+\cdots+P_{\mu_t}^{k-1})c_t.     
\end{equation}
We have, for $m\ge 1$, 
\begin{align}
P_{\mu_t}^{m}c_t &= P_{\mu_t}^{m}\Theta \Theta^{-1} c_t \le P_{\mu_t}^{m}\Theta \Theta^{-1} \lambda_t \le P_{\mu_t}^{m} \Theta \norm{\Theta^{-1} \lambda_t}_{\infty} \mathbf{1}\notag\\
& =   \norm{\Theta^{-1} \lambda_t}_{\infty} P_{\mu_t}^{m-1} P_{\mu_t} \Theta \mathbf{1} \le  \norm{\Theta^{-1}\lambda_t }_{\infty}  P_{\mu_t}^{m-1}  \beta \Theta \mathbf{1}\notag \\
& = \beta   \norm{\Theta^{-1}\lambda_t }_{\infty} P_{\mu_t}^{m-1} \Theta \mathbf{1} \notag\\
&\le \beta^m \norm{\Theta^{-1}\lambda_t }_{\infty}  \Theta\mathbf{1},\label{eq:pmut}
\end{align}
where the first two inequalities follow from the fact that elements of $P_{\mu_t}^{m-1} P_{\mu_t} \Theta$ and  $P_{\mu_t}^{m-1} P_{\mu_t} \Theta\Theta^{-1}$  are nonnegative and 
we can bound components of $c_t$ with $\lambda_t$ and $\Theta^{-1} \lambda_t$ with $\norm{\Theta^{-1}\lambda_t}_{\infty}\mathbf{1}$, the third inequality follows from (\ref{eq:one}), the last inequality follows from an inductive argument, and the equations follow from straightforward rearrangements. Part of the above inequality also shows that, for $m=0$, $c_t\le  \norm{\Theta^{-1} \lambda_t}_{\infty} \Theta \mathbf{1}$. Hence by (\ref{eq:tmuk}) we obtain
\begin{align}
T_{\mu_t}^k J_t&\le J_t + (1+\beta+\beta^2+\cdots+\beta^{k-1})\norm{\Theta^{-1}\lambda_t}_{\infty} \Theta\mathbf{1}\notag\\
&\le J_t + \frac{\norm{\Theta^{-1}\lambda_t}_{\infty} \Theta\mathbf{1}}{1-\beta}, \label{eq:tmuk2}
\end{align} 
where in the first inequality we used (\ref{eq:pmut}) and the fact that $c_t\le\lambda_t \norm{\Theta^{-1}\lambda_t}_{\infty} \Theta\mathbf{1}$. We proved item (1). Since $\lim_{k\ra\infty} T_{\mu_t}^k J_t = J^{\mu_t}$, we proved item (2) by letting $k\ra\infty$. Finally, applying $T_{\mu_t}$ to both sides of the inequality in item (1) and using the fact $J^{\mu_t}=T_{\mu_t} J^{\mu_t}$, we obtain 
\begin{equation*}
J^{\mu_t} = T_{\mu_t} J^{\mu_t} \le T_{\mu_t}(J_t + \frac{\norm{\Theta^{-1}\lambda_t}_{\infty} \Theta\mathbf{1}}{1-\beta}).     
\end{equation*}
By Lemma \ref{lem:error} and the fact that $T_{\mu_t} J_t= T J_t$ , we obtained item (3). 
\end{proof}

\section{Convergence analysis for the stochastic shortest path problem with proper policies}
\label{sec:main}

The convergence analysis starts with an asymptotic estimate for $c_t = T J_t - J_t$ and $\lambda_t=\max(c_t,0)$. All convergence and asymptotic estimates for random variables in this section are understood in the sense of probability 1. 

\begin{lem}\cite{tsitsiklis2002convergence}\label{lem:ct}
Under Assumption \ref{as:step}, we have 
$$\limsup_{t\ra\infty} c_t \le 0\text{ and }\lim_{t\ra\infty} \lambda_t =0.$$
\end{lem}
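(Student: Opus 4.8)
The plan is to treat the iteration (\ref{eq:J}) as a stochastic approximation scheme whose ``target'' at time $t$ is $J^{\mu_t}$, and to combine the weighted-maximum-norm contraction of Lemma~\ref{lem:contraction} with the a priori bounds on $J^{\mu_t}$ furnished by Lemma~\ref{lem:main}. As a preliminary reduction, note that the two assertions are equivalent: since $\lambda_t=\max(c_t,0)\ge 0$ and $c_t\le\lambda_t$, componentwise $\limsup_t c_t\le 0$ forces $\lambda_t\to 0$, while $\lambda_t\to 0$ gives $\limsup_t c_t\le\limsup_t\lambda_t=0$; so it suffices to prove $\lambda_t\to 0$.

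First I would dispose of the noise and of boundedness. Because $\E{w_t\mid\F_t}=0$, $\E{\norm{w_t}^2\mid\F_t}\le C$, and $\sum_t\gamma_t^2<\infty$, the stochastic-approximation/supermartingale results of the Appendix show the accumulated noise is asymptotically negligible: the auxiliary process $W_t$ given by $W_{t+1}=(1-\gamma_t)W_t+\gamma_t w_t$, $W_0=0$, satisfies $W_t\to 0$ a.s., and the tails $\sum_{s\ge t}\gamma_s w_s\to 0$ a.s. Also, since the targets $J^{\mu_t}$ lie in the finite (hence bounded) set $\{J^\mu:\mu\in\Pi\}$ and each update is a convex combination, a routine argument shows $\{J_t\}$ is bounded a.s.

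The heart of the proof is a two-sided comparison of $J_t$ with $J^*$. For the lower bound, I would argue that, using only $J^{\mu_t}\ge J^*$, the shifted sequence $\hat J_t:=\Theta^{-1}(J_t-J^*)-\Theta^{-1}W_t$ satisfies $\hat J_{t+1}\ge(1-\gamma_t)\hat J_t$ componentwise, so $\max(-\hat J_t,0)$ is eventually nonincreasing and decays to $0$ (as $\sum_t\gamma_t=\infty$); with $W_t\to 0$ this yields $\liminf_t(J_t-J^*)\ge 0$ a.s. Hence, for every $\eps>0$, $J_t\ge J^*-\eps\Theta\mathbf 1$ eventually, and then $\norm{TJ_t-TJ^*}_\theta\le\beta\norm{J_t-J^*}_\theta$ together with Lemma~\ref{lem:error} forces $\norm{\Theta^{-1}\lambda_t}_\infty\le\beta\norm{(J_t-J^*)^+}_\theta+\eps$ eventually. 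For the upper bound I would feed Lemma~\ref{lem:main} back into (\ref{eq:J}): item~(2) gives $J_{t+1}\le J_t+\gamma_t\frac{\norm{\Theta^{-1}\lambda_t}_\infty}{1-\beta}\Theta\mathbf 1+\gamma_t w_t$, so the total upward drift of $J_t$ is controlled by $\sum_t\gamma_t\norm{\Theta^{-1}\lambda_t}_\infty$, while item~(3) plus the contraction pulls $J_t$ back toward $J^*$. Balancing these — for instance by iterating the update over a time window, where the convex-combination form of the iterate effectively replaces the one-step factor $\beta$ by $\beta+\pi(1-\beta)<1$, with $\pi$ the product of the $(1-\gamma_s)$ over the window — should yield a self-improving bound on $\limsup_t\norm{\Theta^{-1}\lambda_t}_\infty$; pushing it to $0$ is where the finiteness of $\Pi$ enters.

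I expect the balancing step to be the main obstacle. The difficulty is intrinsic: $\mu_t$ is greedy for the current iterate $J_t$ rather than for a fixed policy, so the scheme is not simply iterating a single contraction toward $J^*$, and the target $J^{\mu_t}$ can sit above $J^*$ by roughly the current policy-evaluation error — so a crude one-step contraction estimate closes only for small $\beta$. Lemma~\ref{lem:main} is precisely the device that tames this, bounding $J^{\mu_t}$ by $J_t$ (or $TJ_t$) plus a term that vanishes with $\lambda_t$; converting this into $\lambda_t\to 0$ for every $\beta\in[0,1)$ is where the stochastic-approximation bookkeeping above and the finiteness of $\Pi$ (the greedy policy, and hence the target of the iteration, eventually stops changing) must be used together. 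This is the weighted-norm, stochastic-shortest-path analogue of the corresponding lemma in the discounted analysis of \cite{tsitsiklis2002convergence}.
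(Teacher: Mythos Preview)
Your proposal has a genuine gap: the ``balancing step'' that you yourself flag as the main obstacle is never carried out, and the sketch you give for it does not close. You propose to bound $\norm{\Theta^{-1}\lambda_t}_\infty$ in terms of $\norm{J_t-J^*}_\theta$, and then to bound the upward drift of $J_t-J^*$ in terms of $\norm{\Theta^{-1}\lambda_t}_\infty$ via Lemma~\ref{lem:main}(2). This is circular, and turning it into a self-improving contraction requires a quantitative argument you do not supply; the appeal to ``the greedy policy eventually stops changing'' is unjustified at this stage (that conclusion is downstream of the theorem you are trying to prove, not an input to this lemma). Moreover, your route invokes Lemma~\ref{lem:contraction} and Lemma~\ref{lem:main}, both of which rest on Assumption~\ref{as:proper}, whereas the lemma is stated under Assumption~\ref{as:step} alone.

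The paper's proof avoids all of this by a one-line algebraic observation you overlooked: apply the \emph{affine} operator $T_{\mu_t}$ (not $T$) to $J_{t+1}$. Because $T_{\mu_t}J=g_{\mu_t}+P_{\mu_t}J$ is linear in $J$, and because $T_{\mu_t}J_t=TJ_t$ and $T_{\mu_t}J^{\mu_t}=J^{\mu_t}$, one gets directly
\[
TJ_{t+1}-J_{t+1}\;\le\;T_{\mu_t}J_{t+1}-J_{t+1}\;=\;(1-\gamma_t)(TJ_t-J_t)+\gamma_t v_t,\qquad v_t=P_{\mu_t}w_t-w_t,
\]
so $c_{t+1}\le(1-\gamma_t)c_t+\gamma_t v_t$. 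Comparing with the linear iteration $V_{t+1}=(1-\gamma_t)V_t+\gamma_t v_t$, $V_0=c_0$, and invoking the elementary stochastic-approximation fact $V_t\to 0$ (Lemma~\ref{lem:v}/Proposition~\ref{prop:sa} with $H_t\equiv 0$) gives $\limsup_t c_t\le 0$ and hence $\lambda_t\to 0$. No reference to $J^*$, to Lemma~\ref{lem:main}, or to the weighted contraction is needed; the lemma really is a statement about the \emph{recursion for $c_t$}, not about proximity of $J_t$ to $J^*$.
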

\begin{proof}
The proof for $\displaystyle\limsup_{t\ra\infty} c_t \le 0$ was established in \cite{tsitsiklis2002convergence} for the case $\alpha<1$. The same argument holds for $\alpha=1$. Here is an outline of the proof. Since $T_{\mu_t}J=g_{\mu_t} + P_{\mu_t} J$ for any $J\in\Real^n$, we can verify that 
\begin{align*}
    T J_{t+1} &\le T_{\mu_t} J_{t+1} = T_{\mu_t}((1-\gamma_t)J_t +\gamma_t J^{\mu_t} + \gamma_t w_t) \\
    & = J_{t+1}+(1-\gamma_t)(TJ_t-J_t) + \gamma_t v_t,
\end{align*}
where we need to use the fact that $T J_t = T_{\mu_t}J_t$ and $v_t = P_{\mu_t} w_t-w_t$. By the property on $w_t$, we have $\mathbb{E}[v_t\,|\,\mathcal{F}_t]=0$ and $\mathbb{E}[\norm{v_t}^2\,|\,\mathcal{F}_t]\le C'$ for some constant $C'$. Hence, $c_t$ satisfies
$$
c_{t+1} \le (1-\gamma_t)c_t + \gamma_t v_t. 
$$
Consider another iteration 
$$
V_{t+1} = (1-\gamma_t)V_t + \gamma_t v_t. 
$$
If $V_0=c_0$, then a comparison argument shows that $c_t\le V_t$ for all $t\ge 0$. By a standard supermartingale convergence argument on stochastic iterations \cite[Chapter 4, p. 143]{bertsekas1996neuro} (see also Proposition \ref{prop:sa} and Lemma \ref{lem:v} in the Appendix), one can show that $V_t$ converges to 0 in probability 1. Hence, $\limsup_{t\ra\infty} c_t \le 0.$ Since $\lambda_t=\max(c_t,0)$, it follows that $\lim_{t\ra\infty} \lambda_t =0$. 
\end{proof}
In particular, the above lemma shows that, for any $\eps>0$, there exists $t(\eps)$ such that 
$$
\frac{\beta\norm{\Theta^{-1}\lambda_t}_{\infty}\Theta \mathbf{1}}{1-\beta} \le \eps\Theta \mathbf{1},\quad \forall t\ge t(\eps).  
$$
Putting this into Lemma \ref{lem:main}(3) shows that 
$$
J_{\mu_t} \le T J_{t} + \eps \Theta\mathbf{1},\quad \forall t\ge t(\eps). 
$$
By (\ref{eq:J}), we obtain
\begin{align*}
J_{t+1}&= (1-\gamma_t)J_t + \gamma_t (J^{\mu_t}+w_t)\\
& \le (1-\gamma_t)J_t + \gamma_t T J_{t} + \gamma_t \eps \Theta\mathbf{1} + \gamma_t w_t,\quad \forall t\ge t(\eps).     
\end{align*}
Define a mapping $H_\eps:\,\Real^n\ra\Real^n$ as 
$
H_\eps J =  T J + \eps \Theta\mathbf{1}
$
and consider the sequence $\set{Z_t}$ generated by 
\begin{align*}
Z_{t+1} &= (1-\gamma_t)Z_t + \gamma_t T Z_{t} + \gamma_t \eps\Theta \mathbf{1} + \gamma_t w_t\\
&= (1-\gamma_t)Z_t + \gamma_t (H_\eps Z_t  + w_t),\quad t\ge t(\eps),
\end{align*}
and $Z_{t(\eps)}=J_{t(\eps)}$. Then, by comparison, 
\begin{equation}
    \label{eq:JZ}
    J_t\le Z_t,\quad\forall t\ge t(\eps).
\end{equation}

Since $T$ is a contraction under the weighted maximum norm $\norm{\cdot}_\theta$, so is $H_\eps$. By Proposition 4.4 in \cite{bertsekas1996neuro} (see also Proposition \ref{prop:sa} in the Appendix), we know that $Z_t$ converges to the unique fixed point of $H_\eps$, denoted by $Z_\eps^*$. 

The following lemma estimates the fixed point of $H_\eps$ relative to $J^*$. 

\begin{lem}\label{lem:Z}
Under Assumption \ref{as:proper}, we have
$$
J^* -\frac{\eps}{1-\beta}\Theta\mathbf{1}\le Z^*_{\eps}\le J^* +\frac{\eps}{1-\beta}\Theta\mathbf{1}. 
$$
\end{lem}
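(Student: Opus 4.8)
The plan is to characterize $Z^*_\eps$ as the unique fixed point of $H_\eps$, namely $Z^*_\eps = T Z^*_\eps + \eps\Theta\mathbf{1}$, and then compare it to $J^*$, which satisfies $J^* = TJ^*$, using the contraction estimate of Lemma \ref{lem:contraction} together with the shift estimate of Lemma \ref{lem:error}. First I would guess that $J^* + \frac{\eps}{1-\beta}\Theta\mathbf{1}$ is a supersolution: applying $H_\eps$ to it and using Lemma \ref{lem:error} with $c = \frac{\eps}{1-\beta} \ge 0$ gives
$$
H_\eps\Bigl(J^* + \tfrac{\eps}{1-\beta}\Theta\mathbf{1}\Bigr) = T\Bigl(J^* + \tfrac{\eps}{1-\beta}\Theta\mathbf{1}\Bigr) + \eps\Theta\mathbf{1} \le TJ^* + \tfrac{\beta\eps}{1-\beta}\Theta\mathbf{1} + \eps\Theta\mathbf{1} = J^* + \tfrac{\eps}{1-\beta}\Theta\mathbf{1},
$$
since $\frac{\beta\eps}{1-\beta} + \eps = \frac{\eps}{1-\beta}$. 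So $H_\eps$ maps this vector below itself.

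Next I would iterate: since $H_\eps$ is monotone (it is a min of affine maps with nonnegative coefficient matrices $P_\mu$, plus a constant shift), applying $H_\eps$ repeatedly to $J^* + \frac{\eps}{1-\beta}\Theta\mathbf{1}$ yields a nonincreasing sequence that, by the contraction property of $H_\eps$ under $\norm{\cdot}_\theta$ (inherited from $T$), converges to $Z^*_\eps$. Hence $Z^*_\eps \le J^* + \frac{\eps}{1-\beta}\Theta\mathbf{1}$. For the lower bound I would run the symmetric argument with $c = -\frac{\eps}{1-\beta} < 0$, where Lemma \ref{lem:error} reverses to give $T(J^* - \frac{\eps}{1-\beta}\Theta\mathbf{1}) \ge TJ^* - \frac{\beta\eps}{1-\beta}\Theta\mathbf{1}$, so that $H_\eps(J^* - \frac{\eps}{1-\beta}\Theta\mathbf{1}) \ge J^* - \frac{\eps}{1-\beta}\Theta\mathbf{1}$; monotone iteration from below then produces a nondecreasing sequence converging up to $Z^*_\eps$, giving $Z^*_\eps \ge J^* - \frac{\eps}{1-\beta}\Theta\mathbf{1}$.

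Alternatively, and perhaps more cleanly, one can avoid the monotonicity/iteration discussion entirely: write $Z^*_\eps - J^* = (TZ^*_\eps + \eps\Theta\mathbf{1}) - TJ^*$, and bound $\norm{Z^*_\eps - J^*}_\theta \le \norm{TZ^*_\eps - TJ^*}_\theta + \eps \le \beta\norm{Z^*_\eps - J^*}_\theta + \eps$, hence $\norm{Z^*_\eps - J^*}_\theta \le \frac{\eps}{1-\beta}$, which is exactly the pair of componentwise inequalities claimed. The main thing to be careful about is the direction of the inequality in Lemma \ref{lem:error} when $c$ is negative, and the fact that $\norm{\Theta\mathbf{1}}_\theta = 1$ so that the weighted-norm bound translates back into the stated componentwise bounds $\abs{Z^*_\eps(i) - J^*(i)} \le \frac{\eps}{1-\beta}\theta(i)$; neither is a real obstacle, so this is essentially a routine contraction estimate.
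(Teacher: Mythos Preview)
Your first argument is essentially the paper's proof: it verifies $H_\eps(J^* + \tfrac{\eps}{1-\beta}\Theta\mathbf{1}) \le J^* + \tfrac{\eps}{1-\beta}\Theta\mathbf{1}$ via Lemma~\ref{lem:error}, then invokes monotonicity of $H_\eps$ to pass to the limit $Z^*_\eps = \lim_{k\to\infty} H_\eps^k(J^* + \tfrac{\eps}{1-\beta}\Theta\mathbf{1})$, and handles the lower bound symmetrically. Your alternative route---directly bounding $\norm{Z^*_\eps - J^*}_\theta$ using the fixed-point equations and the contraction of $T$---is not what the paper does but is correct and arguably cleaner, since it bypasses the monotone-iteration step and delivers both inequalities at once from $\norm{Z^*_\eps - J^*}_\theta \le \beta\norm{Z^*_\eps - J^*}_\theta + \eps$; the trade-off is that the paper's argument stays componentwise throughout and so makes the role of Lemma~\ref{lem:error} (and hence of the weight vector $\theta$) more visible.
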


\begin{proof} By Lemma \ref{lem:error} and $T J^* = J^*$, we have
\begin{align*}
H_\eps(J^*+\frac{\eps}{1-\beta}\Theta\mathbf{1}) &= T(J^*+\frac{\eps}{1-\beta} \Theta\mathbf{1}) + \eps\Theta\mathbf{1} \\
& \le T J^* + \frac{\eps\beta}{1-\beta} \Theta\mathbf{1} + \eps\Theta\mathbf{1}\\
& = J^* +\frac{\eps}{1-\beta}\Theta\mathbf{1}.
\end{align*}
It follows that 
$$
Z^*_{\eps} = \lim_{k\ra\infty}H^k_{\eps}(J^*+\frac{\eps}{1-\beta}\Theta\mathbf{1})\le  J^* +\frac{\eps}{1-\beta}\Theta\mathbf{1},
$$
where we used monotonicity of $H_\eps$ (implied by that of $T$). Similary, by Lemma \ref{lem:error}, we can show that 
\begin{align*}
H_\eps(J^*-\frac{\eps}{1-\beta}\Theta\mathbf{1}) \ge J^* - \frac{\eps}{1-\beta}\Theta\mathbf{1}
\end{align*}
and
$
Z_\eps^*\ge J^*-\frac{\eps}{1-\beta}\Theta\mathbf{1}.
$
\end{proof}
We now state and prove the main result of the paper. 

\begin{thm}\label{prop:main}
Under Assumptions \ref{as:proper} an \ref{as:step}, the sequence $J_t$ generated by the optimistic policy iteration (\ref{eq:J}) and (\ref{eq:mu}), applied to a stochastic shortest path problem, converges to $J^*$, with probability 1.
\end{thm}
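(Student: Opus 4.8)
The plan is to sandwich the iterates: with probability one I would establish both $\limsup_{t\to\infty} J_t \le J^*$ and $\liminf_{t\to\infty} J_t \ge J^*$, componentwise, which together force $J_t \to J^*$. The upper bound is essentially assembled from the lemmas already proved; the lower bound needs one extra but short comparison argument exploiting the optimality of $J^*$.

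For the upper bound, fix $\eps>0$. By Lemma \ref{lem:ct} there is a sample-path-dependent time $t(\eps)$ after which Lemma \ref{lem:main}(3) gives $J^{\mu_t}\le TJ_t+\eps\Theta\mathbf{1}$; substituting into \eqref{eq:J} and comparing termwise with the iteration $Z_{t+1}=(1-\gamma_t)Z_t+\gamma_t(H_\eps Z_t+w_t)$ started at $Z_{t(\eps)}=J_{t(\eps)}$ yields $J_t\le Z_t$ for $t\ge t(\eps)$, as in \eqref{eq:JZ}. Since $H_\eps$ is a $\norm{\cdot}_\theta$-contraction, $Z_t\to Z^*_\eps$, and Lemma \ref{lem:Z} bounds $Z^*_\eps\le J^*+\frac{\eps}{1-\beta}\Theta\mathbf{1}$. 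Hence $\limsup_{t\to\infty}J_t\le J^*+\frac{\eps}{1-\beta}\Theta\mathbf{1}$ on an event of probability one; taking the intersection over $\eps=1/k$, $k\in\mathbb{N}$, gives $\limsup_{t\to\infty}J_t\le J^*$ with probability one.

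For the lower bound I would use that $J^*$ is optimal: since $J^*(i)=\min_{\mu\in\Pi}J^\mu(i)$, we have $J^{\mu_t}\ge J^*$ componentwise for every $t$, so \eqref{eq:J} gives $J_{t+1}\ge(1-\gamma_t)J_t+\gamma_t(J^*+w_t)$. Let $\set{Y_t}$ solve $Y_{t+1}=(1-\gamma_t)Y_t+\gamma_t(J^*+w_t)$ with $Y_0=J_0$; a termwise comparison (valid once $\gamma_t\le 1$, which holds for all large $t$ since $\gamma_t\to 0$ under Assumption \ref{as:step}) gives $J_t\ge Y_t$ for all large $t$. Now $Y_t-J^*$ satisfies $Y_{t+1}-J^*=(1-\gamma_t)(Y_t-J^*)+\gamma_t w_t$, which is exactly the zero-mean, bounded-variance stochastic iteration treated in the proof of Lemma \ref{lem:ct} (equivalently, the stochastic-approximation iteration for the trivially contractive constant map $J\mapsto J^*$); by Assumption \ref{as:step} and the supermartingale convergence argument (Proposition \ref{prop:sa}, Lemma \ref{lem:v}), $Y_t\to J^*$, hence $\liminf_{t\to\infty}J_t\ge J^*$.

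Combining the two bounds yields $J_t\to J^*$ with probability one. I do not expect a genuine obstacle here, since the real content is upstream in Lemma \ref{lem:main} and Lemma \ref{lem:ct}; the only points I would state carefully rather than belabor are (i) the comparison principle for the stochastic iterations requires $0\le\gamma_t\le 1$ and is applied pathwise from the random time $t(\eps)$ onward, and (ii) upgrading ``probability one for each fixed $\eps$'' to ``probability one'' in the $\limsup$ bound via a countable intersection over $\eps=1/k$.
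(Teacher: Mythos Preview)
Your proposal is correct and follows essentially the same route as the paper: the upper bound via the $Z_t$ comparison against the $H_\eps$-iteration and Lemma~\ref{lem:Z}, and the lower bound via $J^{\mu_t}\ge J^*$ and the auxiliary $Y_t$ iteration converging to $J^*$. Your added care about the countable intersection over $\eps=1/k$ and about needing $\gamma_t\le 1$ for the pathwise comparison are points the paper leaves implicit; the only cosmetic fix is that, if the comparison for the lower bound is only valid from some $t_0$ onward, you should initialize $Y$ at $Y_{t_0}=J_{t_0}$ rather than at $Y_0=J_0$ (or simply assume $\gamma_t\le 1$ throughout, as is standard).
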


\begin{proof}
Given any $\eps>0$, by the argument preceding (\ref{eq:JZ}), there exists $t(\eps)$ such that $J_t\le Z_t$ for all $t\ge t(\eps)$. Since $\displaystyle\lim_{t\ra\infty}Z_t=Z_{\eps}^*$, it follows that $\displaystyle\limsup_{t\ra\infty} J_t\le Z_{\eps}^*$. By Lemma \ref{lem:Z}, we have $\displaystyle\limsup_{t\ra\infty} J_t\le J^* +\frac{\eps}{1-\beta}\Theta\mathbf{1}$. Since the choice of $\eps>0$ is arbitrary, we obtain $\displaystyle\limsup_{t\ra\infty} J_t\le J^*$. By the definition of $J^{\mu_t}$ and $J^*$, we have $J^{\mu_t}\ge J^*$. Hence, (\ref{eq:J}) implies 
$$
J_{t+1} \ge (1-\gamma_t)J_t + \gamma_t J^* + \gamma_t w_t. 
$$
Consider the iteration
$$
Y_{t+1} = (1-\gamma_t)Y_t + \gamma_t J^* + \gamma_t w_t
$$
with $Y_0=J_0$. Then the sequence $\set{Y_{t}}$ converges to $J^*$ (see Proposition 4.4 in \cite{bertsekas1996neuro} or Proposition \ref{prop:sa} in the Appendix). By comparison, $\displaystyle\liminf_{t\ra\infty} J_t\ge J^*$. Hence, $\displaystyle\lim_{t\ra\infty} J_t = J^*$. 
\end{proof}

\section{Relaxing the proper policy assumption}\label{sec:improper}

Assumption \ref{as:proper} requires that all policies are proper. In this section, we discuss how to relax this assumption. For the stochastic shortest path problem, the following relaxed assumption was proposed in \cite{bertsekas1991analysis} (see also \cite[Chapter 2]{bertsekas1996neuro}). 

\begin{assum}\label{as:relax}
There exists at least one proper policy, and every improper policy yields an infinite cost for at least one initial state, i.e., for every improper $\mu\in\Pi$, $\displaystyle J^\mu(i)=\lim_{k\ra\infty}[\sum_{t=0}^{k-1}P_\mu g_\mu]_i=\infty$ for some $i\in S$. 
\end{assum}

\begin{lem}\cite{bertsekas1991analysis}\label{lem:mod}
If Assumption \ref{as:relax} holds, we have, for every  $J\in\Real^n$ and proper $\mu\in\Pi$,
$$
\lim_{t\ra\infty} T^t J = J^*,\quad \lim_{t\ra\infty} T^t_{\mu} J = J^{\mu},
$$
where $J^*$ and $J^\mu$ are the unique fixed points of $T$ and $T_{\mu}$, respectively. 
\end{lem}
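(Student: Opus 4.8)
The plan is to prove Lemma~\ref{lem:mod} by reducing it to the contraction-based result of Lemma~\ref{lem:exist}, but applied on a restricted set of policies, together with the classical fixed-point/convergence theory for the stochastic shortest path problem under Assumption~\ref{as:relax}. The statement has two halves: convergence of $T_\mu^t J$ for a \emph{proper} policy $\mu$, and convergence of $T^t J$. The first half is the easier one and I would dispatch it first.

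\textbf{Step 1 (the proper-policy half).} Fix a proper policy $\mu$. Restricted to this single policy, the map $T_\mu = g_\mu + P_\mu(\cdot)$ is affine, and since $\mu$ is proper, the standard argument behind Lemma~\ref{lem:contraction} applies verbatim \emph{to the single matrix} $P_\mu$: there is a positive vector $\theta_\mu$ and $\beta_\mu\in[0,1)$ with $P_\mu\Theta_\mu\mathbf{1}\le\beta_\mu\Theta_\mu\mathbf{1}$, hence $T_\mu$ is a contraction in $\norm{\cdot}_{\theta_\mu}$. (One does not need \emph{all} policies proper for this — only $\mu$ itself.) By the contraction mapping theorem $T_\mu$ has a unique fixed point, which must equal $J^\mu$ because $\sum_{t=0}^{k-1}P_\mu^t g_\mu$ converges as $k\to\infty$ for a proper $\mu$, and $T_\mu^t J = P_\mu^t J + \sum_{s=0}^{t-1}P_\mu^s g_\mu \to J^\mu$ for every $J$ since $P_\mu^t\to 0$ (a consequence of the weighted-norm contraction, $\norm{P_\mu^t}_{\theta_\mu}\le\beta_\mu^t\to0$). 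This proves $\lim_{t\to\infty}T_\mu^t J = J^\mu$.

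\textbf{Step 2 (the $T$-operator half).} This is the part that genuinely uses Assumption~\ref{as:relax} and is the main obstacle, since $T$ is \emph{not} in general a contraction under this weaker assumption — an improper policy may have a non-substochastic-like $P_\mu$ with spectral radius $\ge 1$, so the uniform weighted-norm bound of Lemma~\ref{lem:contraction} fails. The route I would take is the one from \cite{bertsekas1991analysis} (also \cite[Ch.~2]{bertsekas1996neuro}): first establish that $J^*$ is the unique fixed point of $T$ within $\Real^n$, using that (i) there is at least one proper policy so $J^*$ is finite, and (ii) every improper policy incurs infinite cost somewhere, which forces the optimal policy to be proper and rules out spurious fixed points coming from improper policies. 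Then show monotone convergence of $T^t J$ from the two extreme starting points $J^\mu$ (for a proper $\mu$) and some large/small vector, bracketing $T^t J$ by monotonicity of $T$ and translating via Lemma~\ref{lem:error} (or its unweighted analogue \eqref{eq:error}) between shifted copies, exactly as in the proof of Lemma~\ref{lem:Z}. In outline: pick a proper $\mu_0$; then $J^* \le T^t J^{\mu_0} \le J^{\mu_0}$ and the sequence $T^t J^{\mu_0}$ is monotone nonincreasing and bounded below, hence convergent, and its limit is a fixed point of $T$, hence equals $J^*$; a symmetric argument from below handles a lower starting vector, and a general $J$ is squeezed between a suitably shifted upper and lower iterate using \eqref{eq:error}.

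\textbf{What I expect to be routine vs.\ hard.} The proper-policy half and the monotonicity/squeezing bookkeeping are routine once Lemma~\ref{lem:error} and \eqref{eq:error} are in hand. The genuine content — and the step I would lean on the cited references for rather than reprove in full — is the claim that $T$ has $J^*$ as its \emph{unique} fixed point under Assumption~\ref{as:relax}; this is where the "improper policies cost infinity" hypothesis does its work, ruling out the pathologies that make the undiscounted problem harder than the discounted one. Since the paper explicitly attributes Lemma~\ref{lem:mod} to \cite{bertsekas1991analysis}, the cleanest writeup simply cites that source for the uniqueness of the fixed point and the convergence $T^t J\to J^*$, and supplies the short self-contained argument only for the proper-policy statement $T_\mu^t J\to J^\mu$, noting it needs only properness of the individual $\mu$ rather than Assumption~\ref{as:proper}.
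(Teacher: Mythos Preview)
The paper does not prove this lemma at all; it simply states the result with the citation \cite{bertsekas1991analysis} and moves on. Your sketch is a faithful outline of the standard argument in that reference (and in \cite[Chapter~2]{bertsekas1996neuro}): the proper-policy half via the single-matrix weighted-norm contraction for $P_\mu$, and the $T$-half via uniqueness of the fixed point under Assumption~\ref{as:relax} followed by a monotone squeezing argument; your own closing recommendation --- to cite the source rather than reprove it --- is precisely what the paper does.
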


There is a problem, however, to analyze the convergence of the optimistic policy iteration (\ref{eq:J}) and (\ref{eq:mu}) under Assumption \ref{as:relax}. Unlike in the standard policy iteration, we cannot guarantee the greedy policy generated by the optimistic iteration is always proper. Hence the value iteration (\ref{eq:J}) will possibly attain infinity and become invalid. To overcome this issue, a natural way would be to let the process terminates with a small probability at each stage. This is equivalent to modifying the Markov decision process $M$, by adding a transition with a small probability to the terminal state under each action, so that it satisfies Assumption \ref{as:proper}. 

A natural question to ask is whether the optimal value of the modified problem stays close to that of the original problem and whether an optimal policy obtained for the modified problem remains an optimal policy for the original problem. 

Formally, we define a modified MDP $\hat{M}=(S,A,\hat{P})$ from the original MDP $M=(S,A,P)$ as follows. Let $\hat{P}_{i0}(a) = P_{i0}(a)+p_\eps\sum_{j=1}^{n}P_{ij}(a)$ and $\hat{P}_{ij}(a)=(1-p_\eps){P}_{ij}(a)$ for all $i,j\in S$ and $a\in A$, where $p_\eps\in (0,1)$ is a small probability to be chosen. Then 
$\hat{P}_{\mu}=(1-p_\eps)P_\mu$ for all $\mu\in\Pi$ and $\hat{M}$ satisfies Assumption \ref{as:proper}. Let $\hat{J}^*$ denote the optimal value for $\hat{M}$ and $\hat{\mu}^*$ a corresponding optimal policy. 

\begin{prop}\label{prop:mod}
Suppose that $M$ satisfies Assumption \ref{as:relax}. For every $\eps>0$, there exists some $\delta>0$ such that, if $p_\eps\in(0,\delta)$, then $\norm{\hat{J}^*-J^*}\le\eps$. Furthermore, if $\delta>0$ is sufficiently small, then $p_\eps\in(0,\delta)$ implies that $\hat{\mu}^*$ is also an optimal policy for $M$. 
\end{prop}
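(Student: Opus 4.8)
The plan is to exploit that $\hat P_\mu=(1-p_\eps)P_\mu$ makes $\hat M$ a ``$(1-p_\eps)$-discounted'' copy of $M$: the value $\hat J^\mu$ of a policy $\mu$ in $\hat M$ is the unique finite fixed point of $\hat T_\mu J=g_\mu+(1-p_\eps)P_\mu J$, i.e.\ $\hat J^\mu=(I-(1-p_\eps)P_\mu)^{-1}g_\mu$, and then to compare $\hat M$ with $M$ one policy at a time, treating proper and improper policies separately. For a \emph{proper} $\mu$ the substochastic matrix $P_\mu$ has spectral radius $<1$, so $\hat J^\mu$ is a rational (hence continuous) function of $p_\eps$ on a neighbourhood of $0$ with $\hat J^\mu\ra(I-P_\mu)^{-1}g_\mu=J^\mu$ as $p_\eps\ra 0^+$. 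Because $\Pi$ is finite, there is a single $\delta_1>0$ such that $-\eps\mathbf 1\le\hat J^\mu-J^\mu\le\eps\mathbf 1$ for every proper $\mu$ and every $p_\eps\in(0,\delta_1)$.

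The crux is the \emph{improper} case, where I claim $\max_i\hat J^\mu(i)\ra+\infty$ as $p_\eps\ra 0^+$. By Assumption \ref{as:relax}, $J^\mu(i_0)=+\infty$ for some $i_0$; since the chain has finitely many states and the transient part of $P_\mu$ can only accrue finite expected cost, this forces a recurrent class $C\subseteq S$ that is closed under $P_\mu$ without leakage to the terminal state, i.e.\ $\sum_{j\in C}[P_\mu]_{ij}=1$ for $i\in C$, and whose average stage cost $\bar g:=\pi^T g_\mu|_C$ is strictly positive, where $\pi$ is the unique positive stationary distribution of the irreducible stochastic matrix $Q:=P_\mu|_C$. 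Restricting the fixed-point identity $\hat J^\mu=g_\mu+(1-p_\eps)P_\mu\hat J^\mu$ to the rows indexed by $C$ gives $\hat J^\mu|_C=g_\mu|_C+(1-p_\eps)Q\,\hat J^\mu|_C$; left-multiplying by $\pi^T$ and using $\pi^TQ=\pi^T$ collapses this to $p_\eps\,\pi^T\hat J^\mu|_C=\bar g$, whence $\max_{i\in C}\hat J^\mu(i)\ge\pi^T\hat J^\mu|_C=\bar g/p_\eps\ra+\infty$. Shrinking $\delta_1$ if necessary (again by finiteness of $\Pi$), we may assume in addition that $\max_i\hat J^\mu(i)>\max_i J^*(i)+\eps$ for every improper $\mu$ and every $p_\eps\in(0,\delta_1)$.

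To assemble the estimate, recall that under Assumption \ref{as:relax} there is a proper optimal policy $\mu^*$ with $J^{\mu^*}=J^*$ (standard stochastic shortest path theory \cite{bertsekas1991analysis}, in the same vein as Lemma \ref{lem:mod}). For $p_\eps\in(0,\delta_1)$, optimality of $\hat\mu^*$ in $\hat M$ together with the proper-policy estimate gives $\hat J^*\le\hat J^{\mu^*}\le J^*+\eps\mathbf 1$, so $\max_i\hat J^*(i)\le\max_iJ^*(i)+\eps$; comparing with the improper bound shows $\hat\mu^*$ cannot be improper for $M$, hence it is proper. Then the proper-policy estimate and $J^{\hat\mu^*}\ge J^*$ give $\hat J^*=\hat J^{\hat\mu^*}\ge J^{\hat\mu^*}-\eps\mathbf 1\ge J^*-\eps\mathbf 1$, so $-\eps\mathbf 1\le\hat J^*-J^*\le\eps\mathbf 1$, which yields $\norm{\hat J^*-J^*}\le\eps$ (up to the usual harmless norm-equivalence rescaling of $\eps$ at the outset); this is the first assertion with $\delta=\delta_1$. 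For the second assertion, put $\rho:=\min\{\max_i(J^\mu(i)-J^*(i)):\mu\text{ proper and not optimal for }M\}$ (if this set is empty every proper policy is optimal and, $\hat\mu^*$ being proper, we are done); $\rho>0$ since only finitely many proper policies occur and each such term is positive. Re-running the argument with $\eps':=\min(\eps,\rho/3)$ in place of $\eps$ yields $\delta>0$ such that, for $p_\eps\in(0,\delta)$, $\hat\mu^*$ is proper, $\hat J^*\le J^*+\eps'\mathbf 1$ and $\hat J^*\ge J^{\hat\mu^*}-\eps'\mathbf 1$, so $\max_i(J^{\hat\mu^*}(i)-J^*(i))\le 2\eps'<\rho$, which by the definition of $\rho$ forces $\hat\mu^*$ to be optimal for $M$.

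The step I expect to be the main obstacle is the improper case: converting the purely qualitative statement ``$J^\mu(i_0)=\infty$'' of Assumption \ref{as:relax} into the quantitative blow-up rate $\bar g/p_\eps$. Once the closed recurrent class $C$ with positive average cost is isolated, the stationary-distribution identity makes the rate essentially automatic; extracting $C$ and the positivity of $\bar g$ from Assumption \ref{as:relax} is the part that relies on the standard---but slightly delicate---first-passage and recurrence decomposition of the finite substochastic chain $P_\mu$.
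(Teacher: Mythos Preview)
Your argument is correct and follows the same two-part skeleton as the paper (proper policies by continuity of the fixed point, improper policies by blow-up, then a finite-$\Pi$ gap argument), but the improper step is handled quite differently. The paper puts $P_\mu$ in Jordan form $Q^{-1}P_\mu Q=\operatorname{diag}(I_p,C)$ with $\rho(C)<1$, writes $\sum_{t<k}\hat P_\mu^t$ explicitly in that basis, and argues by continuity in $p_\eps$ that the component of $\hat J^\mu$ along the $I_p$ block can be pushed above any threshold $c$. You instead isolate a closed recurrent class $C$ with positive average cost $\bar g=\pi^Tg_\mu|_C>0$ and use the stationary-distribution identity $\pi^T\hat J^\mu|_C=\bar g/p_\eps$, which is more probabilistic and delivers an explicit blow-up rate; it also sidesteps the (not entirely trivial) claim that the eigenvalue $1$ of the substochastic matrix $P_\mu$ is semisimple. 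Your self-identified ``main obstacle''---deducing $\bar g>0$ from $J^\mu(i_0)=+\infty$---is indeed the only place that needs care: the standard transient/recurrent decomposition shows the transient contribution to $\sum_{t<k}[P_\mu^t g_\mu]_{i_0}$ is bounded and the rest behaves like $k\sum_j q_j\bar g_j$, so divergence to $+\infty$ forces some reachable $\bar g_j>0$. The assembly and the gap constant $\rho$ mirror the paper's $d=\min_{\mu\in\Pi_0,\,J^\mu\neq J^*}\norm{J^\mu-J^*}$ with only cosmetic differences.
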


\begin{proof}
The proof consists of two main parts. First we show that, by Assumption \ref{as:relax}, any improper policy for $M$ necessarily has large cost-to-go value for at least one component and hence cannot be optimal for $\hat{M}$ (even if it becomes proper with the modification), provided that $p_\eps$ is chosen sufficiently small. We then show that the value of a proper policy in $\hat{M}$ remains close to its value in $M$, provided that $p_\eps$ is sufficiently small. As a result, the optimal value remains close and optimal policy remains the same for $p_\eps$ chosen sufficiently small. 

Let $\mu$ be an improper policy for $M$. Consider the Jordan normal form of $P_\mu$:
$$
Q^{-1}P_{\mu}Q = \begin{bmatrix}I_p & 0\\0 & C \end{bmatrix},
$$
where $Q$ is a nonsingular matrix and $C$ has spectral radius $\rho(C)<1$. We obtain $I_p$ because the eigenvalue 1 of $P_\mu$ is semisimple \cite[p.~696]{meyer2000matrix}. The dimension of $I_p$ cannot be zero because otherwise $\mu$ would be a proper policy. It follows that 
\begin{equation}\label{eq:sumP}
Q^{-1}\sum_{t=0}^{k-1}P_{\mu}^tQ = \begin{bmatrix} kI_p & 0\\ 0 & \sum_{t=0}^{k-1}C^t
\end{bmatrix},
\end{equation}
where $\sum_{t=0}^\infty C^t = (I-C)^{-1}$. Since $\hat{P}_\mu=(1-p_\eps)P_\mu$, we have
\begin{equation*}
Q^{-1}\sum_{t=0}^{k-1}\hat{P}_{\mu}^t Q = \begin{bmatrix} \frac{1-(1-p_{\eps})^{k}}{p_{\eps}}I_p & 0\\ 0 & \sum_{t=0}^{k-1}(1-p_\eps)^tC^t
\end{bmatrix},
\end{equation*}
where $\sum_{t=0}^\infty (1-p_\eps)^tC^t = (I-(1-p_\eps)C)^{-1}$. 

By Assumption \ref{as:relax}, $\displaystyle\lim_{k\ra\infty}\big[\sum_{t=0}^{k-1}P_\mu^t g_\mu\big]_i=\infty$ for some $i\in S$. This is equivalent to 
$$
\lim_{k\ra\infty}\left[Q \begin{bmatrix} kI_p & 0\\ 0 & \sum_{t=0}^{k-1}C^t
\end{bmatrix} Q^{-1} g_\mu\right]_i=\infty,
$$
which is again equivalent to  
$$
\lim_{k\ra\infty}\left[Q \begin{bmatrix} kI_p & 0\\ 0 & 0
\end{bmatrix} Q^{-1} g_\mu\right]_i=\infty
$$
in view of $\sum_{t=0}^\infty C^t<\infty$. Since $\frac{1-(1-p_{\eps})^{k}}{p_{\eps}}\ra k$, as $p_{\eps}\ra0$, and $\sum_{t=0}^\infty (1-p_\eps)^tC^t = (I-(1-p_\eps)C)^{-1}$ is continuous w.r.t. $p_\eps$ and hence bounded for $p_\eps\in[0,1]$, it is straightforward to verify that, for any $c>0$, there exists $\delta>0$, such that 
\begin{align}
&\lim_{k\ra\infty}\big[\sum_{t=0}^{k-1}\hat{P}_\mu^t g_\mu\big]_i\notag \\
& =\lim_{k\ra\infty}\left[Q \begin{bmatrix} \frac{1-(1-p_{\eps})^{k}}{p_{\eps}}I_p & 0 \\ 0 & \sum_{t=0}^{k-1} (1-p_\eps)^tC^t
\end{bmatrix} Q^{-1} g_\mu\right]_i\notag\\
& >c,\quad \forall p_\eps \in (0,\delta]. \label{eq:c}
\end{align}

Now consider a proper policy $\mu$ for $M$ and let $\Pi_0$ denote the set of all proper policies for $M$. Clearly $\mu$ remains a proper policy for $\hat{M}$. The cost vector for $\mu$ in $\hat{M}$ is the unique solution to 
$$
\hat{J}^{\mu} = \hat{T}_{\mu} \hat{J}^{\mu}. 
$$
Note that $\hat{T}_{\mu}$ changes continuous with respect to $p_\eps$ and $\hat{T}_{\mu}=T_{\mu}$ when $p_\eps=0$. Since $T_{\mu}$ has a unique solution $J^\mu$, it follows that $\hat{J}^{\mu}$ also changes continuously with respect to $p_\eps$. Hence for any $\rho>0$, there exists $\delta>0$ such that 
\begin{equation}\label{eq:rho}
    \norm{\hat{J}^{\mu}-J^{\mu}} < \rho,\quad \forall p_\eps \in (0,\delta],\,\forall \mu\in\Pi_0,
\end{equation}
because the number of policies is finite. Let $J^*$ be the optimal value of $M$ and define
$$
d = \min_{\mu\in\Pi_0\atop J^{\mu}\neq J^*}\norm{J^{\mu}-J^*}. 
$$
Choose any $\rho<\frac{d}{2}$ and $\delta$ accordingly such that (\ref{eq:rho}) holds. Choose 
$
c = \max_{\mu\in \Pi_0} J^{\mu} + \rho
$
and reduce $\delta$ accordingly such that (\ref{eq:c}) holds. In view of (\ref{eq:rho}) and the definition of $c$, any improper policy (w.r.t. $M$) cannot be optimal for $\hat{M}$ for all $p_\eps \in (0,\delta]$. Furthermore, suppose that $\hat{J^*}$ is the optimal value and $\hat{\mu}^*$ is an optimal policy for $\hat{M}$. Then $\hat{\mu}^*$ is proper w.r.t. $M$. We claim that $J^{\hat{\mu}^*}=J^*$ and hence $\hat{\mu}^*$ is an optimal policy for $M$. Suppose this is not the case. Then $J^{\hat{\mu}^*}-J^*\ge d$. Since $\norm{\hat{J}^{\hat{\mu}^*}-J^{\hat{\mu}^*}}<\frac{d}{2}$, it follows that $\hat{J}^{\hat{\mu}^*}>J^*+\frac{d}{2}$. Let ${\mu}^*$ be a proper optimal policy for $M$. Then  (\ref{eq:rho}) implies that $\hat{J}^{\mu^*} <  J^{\mu^*} + \frac{d}{2} =J^* + \frac{d}{2}$. Hence $\hat{J}^{\mu^*}<\hat{J}^{\hat{\mu}^*}$ and $\hat{\mu}^*$ cannot be an optimal policy for $\hat{M}$, which is a contradiction. Thus  $\hat{\mu}^*$ is also an optimal policy for $M$. The proof is complete. 
\end{proof}

\section{The case with nonuniform initial exploration}
\label{sec:nonuniform}

The version of optimistic policy iteration described by (\ref{eq:J}) and (\ref{eq:mu}) is synchronous in the sense that $n$ trajectories are simultaneously observed at each iteration, one for each initial state. It is pointed out in \cite{tsitsiklis2002convergence} that the scenario of picking one single state (randomly, uniformly, and independently) at each iteration to generate a trajectory from and update the cost-to-go value at this state can be captured by the following iteration:
\begin{equation}\label{eq:uniform}
    J_{t+1}(i) = \left\{\begin{aligned}
    &(1-\gamma_t)J_t(i) + \gamma_t (J^{\mu_t}(i) + w_t(i)),  \\
    &\qquad\qquad\text{ with probability } \frac{1}{n}, \\
    & J(i),   \qquad\qquad\qquad\text{otherwise}. 
    \end{aligned}\right.
\end{equation}
Furthermore, this algorithm can be equivalently described in the form
\begin{equation}
    \label{eq:J2}
    J_{t+1} = (1-\frac{\gamma_t}{n})J_t + \frac{\gamma_t}{n}(J^{\mu_t}+v_t),
\end{equation}
where 
$$
v_t(i) = w_t(i) + (n\chi_t(i)-1)(-J_t(i) + J^{\mu_t}(i) + w_t(i)),
$$
where each $\chi_t(i)$ is a random variable satisfying $\chi_t(i)=1$ if state $i$ is selected and $\chi_t(i)=0$ otherwise. Then, it can be shown that $v_t$ satisfies $\mathbb{E}[v_t|\mathcal{F}_t]  = 0$ and 
$
\mathbb{E}[\norm{v_t}^2|\mathcal{F}_t] \le A+B\norm{J_t}^2,
$
for some constants $A$ and $B$, where we used the fact that $J^{\mu_t}$ is bounded, because there are only a finite number of policies. To use an argument similar to that in the proof of Theorem \ref{prop:main}, one needs to show that $\mathbb{E}[\norm{v_t}^2|\mathcal{F}_t]$ is bounded. 
\begin{prop}\label{prop:boundJ}
Under Assumptions \ref{as:proper} and \ref{as:step}, the sequence $J_t$ in (\ref{eq:J2}) is bounded. 
\end{prop}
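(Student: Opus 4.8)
The plan is to show that the scalar process $X_t:=\norm{J_t}^2$ is almost surely bounded; since all norms on $\Real^n$ are equivalent this gives boundedness of $J_t$. The only structural input beyond the stated noise bounds is that $\norm{J^{\mu_t}}\le D$ for a fixed constant $D$ and all $t$: under Assumption~\ref{as:proper} every policy is proper, so each $J^\mu$ is finite by Lemma~\ref{lem:exist}, and $\Pi$ is finite. Set $\eta_t:=\gamma_t/n$; Assumption~\ref{as:step} gives $\sum_t\eta_t=\infty$, $\sum_t\eta_t^2<\infty$, hence $\eta_t\ra0$, and I fix $t_0$ so that $\eta_t\le1$ and $B\eta_t\le\tfrac12$ for $t\ge t_0$.

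First I would derive a contraction-type one-step estimate for $X_t$. Writing $J_{t+1}=u_t+\eta_tv_t$ with the $\F_t$-measurable vector $u_t:=(1-\eta_t)J_t+\eta_tJ^{\mu_t}$, and using $\E{v_t\mid\F_t}=0$ together with $\E{\norm{v_t}^2\mid\F_t}\le A+B\norm{J_t}^2$, one has $\E{X_{t+1}\mid\F_t}=\norm{u_t}^2+\eta_t^2\E{\norm{v_t}^2\mid\F_t}$. Bounding $\norm{u_t}\le(1-\eta_t)\norm{J_t}+\eta_tD$ and applying $2D\norm{J_t}\le\norm{J_t}^2+D^2$ to the cross term gives $\norm{u_t}^2\le(1-\eta_t)\norm{J_t}^2+\eta_tD^2$, whence, for all $t\ge t_0$,
\[
\E{X_{t+1}\mid\F_t}\;\le\;(1-\eta_t+B\eta_t^2)X_t+\eta_tD^2+\eta_t^2A\;\le\;(1-a_t)X_t+Ma_t ,
\]
where $a_t:=\tfrac12\eta_t\in(0,1)$ (since $B\eta_t^2\le\tfrac12\eta_t$) and $M$ is a constant absorbing $D^2$ and $(\sup_t\eta_t)A$, using $\eta_t\le2a_t$.

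The main work — and, I expect, the principal obstacle — is to pass from this recursion to $\sup_tX_t<\infty$ a.s., because the driving term $Ma_t$ is \emph{not} summable ($\sum_ta_t=\infty$), so the supermartingale convergence theorem (Robbins--Siegmund) does not apply directly; what helps instead is the contraction factor $(1-a_t)$, which only bites when $X_t$ is large. I would argue in three steps. (i) A second-moment version of the same contraction estimate — for which I also use that in a finite MDP with proper policies the simulated returns, hence $v_t$ (a fixed polynomial in $w_t$, $J_t$ and the selection indicator), have bounded conditional moments of all orders, so $\E{\norm{v_t}^4\mid\F_t}\le A'+B'\norm{J_t}^4$ — yields $\sup_t\E{X_t^2}<\infty$. (ii) Decompose $X_{t+1}=X_t-a_t(X_t-M)+\xi_{t+1}+\rho_{t+1}$, where $\rho_{t+1}:=\E{X_{t+1}\mid\F_t}-X_t+a_t(X_t-M)\le0$ by the recursion and $\xi_{t+1}:=X_{t+1}-\E{X_{t+1}\mid\F_t}$ is a martingale difference; a direct computation of $\mathrm{Var}(X_{t+1}\mid\F_t)$ gives $\E{\xi_{t+1}^2\mid\F_t}\le C\eta_t^2(1+X_t)^2$, so with $\sup_t\E{X_t^2}<\infty$ and $\sum_t\eta_t^2<\infty$ the martingale $\widetilde M_t:=\sum_{s=t_0}^{t-1}\xi_{s+1}$ is $L^2$-bounded, hence a.s.\ convergent, and $\sup_r|\widetilde M_r|<\infty$ a.s. (iii) Finally a pathwise comparison: given $t>t_0$, let $s^{*}$ be the largest index in $\{t_0,\dots,t-1\}$ with $X_{s^{*}}<M$, if any; for the indices strictly between $s^{*}$ and $t$ one has $X_s\ge M$, so $X_{s+1}\le X_s+\xi_{s+1}$ there, while $X_{s^{*}+1}\le(1-a_{s^{*}})X_{s^{*}}+a_{s^{*}}M+\xi_{s^{*}+1}\le M+\xi_{s^{*}+1}$, and summing gives $X_t\le M+(\widetilde M_t-\widetilde M_{s^{*}})$; if no such $s^{*}$ exists then $X_t\le X_{t_0}+\widetilde M_t$. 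In either case $X_t\le\max\bigl(X_{t_0}+\sup_r\widetilde M_r,\ M+2\sup_r|\widetilde M_r|\bigr)$ for every $t$, which is a.s.\ finite. (Alternatively, once the recursion $\E{X_{t+1}\mid\F_t}\le(1-a_t)X_t+Ma_t$ is in hand, boundedness of $X_t$ can be deduced from the supermartingale-convergence-based stochastic-approximation results collected in the Appendix.)

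In summary, the heart of the argument is step~(iii) together with the almost sure convergence of the noise martingale in step~(ii): the non-summability of $a_t$ precludes a one-line supermartingale bound and forces a last-exit/pathwise comparison, while the auxiliary variance estimate in step~(ii) is where the finiteness of all moments of the Monte Carlo returns is used.
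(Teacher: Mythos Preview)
Your argument is correct, but it is substantially more elaborate than the paper's. The paper's entire proof consists of observing that $\norm{J^{\mu_t}}\le D$ (finitely many policies, each proper) and then invoking Proposition~\ref{prop:sa}(1) from the Appendix (equivalently, Proposition~4.7 in \cite{bertsekas1996neuro}): the iteration \eqref{eq:J2} is of the form \eqref{eq:JF} with $H_tJ_t=J^{\mu_t}$, which trivially satisfies the pseudo-contraction condition $\norm{H_tJ_t-J^*}_\theta\le D$ with $\beta=0$, and the general boundedness result applies. Your route is a self-contained unpacking of a special case of that proposition: you derive the scalar contraction recursion for $\norm{J_t}^2$, then control the martingale part of the increments via a fourth-moment bound on $v_t$ (which you correctly extract from the finite-MDP structure, though the paper's machinery only requires the stated second-moment bound), and finish with a last-exit pathwise comparison. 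Both approaches ultimately rest on supermartingale convergence; the paper's proof of Proposition~\ref{prop:sa} uses a different decomposition (comparison with auxiliary processes $Y_t$ and $V_t$ via Lemmas~\ref{lem:v} and~\ref{lem:y}) that avoids higher moments. Your parenthetical alternative at the end is essentially the paper's actual proof, except that the Appendix result should be applied directly to the vector iteration \eqref{eq:J2}, not to the derived scalar recursion for $X_t$.
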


\begin{proof}
There exists some $D>0$ such that $\norm{J^{\mu_t}}\le D$ for all $t\ge 0$. Boundedness of $J_t$ follows from Proposition 4.7 in \cite{bertsekas1996neuro} (see also Proposition \ref{prop:sa} in the Appendix). 
\end{proof}
Hence, $\mathbb{E}[\norm{v_t}^2|\mathcal{F}_t]$ is bounded, this time by a sequence of random variables $A_t$ that are $\mathcal{F}_t$-adapted and bounded. By a similar argument to the proof of Theorem \ref{prop:main}, one can show that the values $J_t$ generated by (\ref{eq:J2}) converge to $J^*$ under the same assumptions. 

A natural question is whether we can extend (\ref{eq:J2}) to the case where the states are chosen according to a nonuniform distribution such that each state has a non-zero probability of being selected. This would lead to the following update rule for $J_t$:
\begin{equation}
    \label{eq:J3}
    J_{t+1} = (1-\Gamma_t)J_t + \Gamma_t(J^{\mu_t}+v_t),
\end{equation}
where $\Gamma_t=\gamma_t\diag{p(1),p(2),\cdots,p(n)}$ and each $p(i)$ is the probability of state $i$ being selected. It is conjectured in \cite{tsitsiklis2002convergence}  that this may not converge (at least with the proof method therein). We also believe this is the case, although a concrete counterexample is yet to be constructed (see Section \ref{sec:ex} for a numerical example). 

Here we provide a slightly different perspective. We argue that, in the case where the states are selected non-uniformly for updating, the stepsize should be different for each state. In fact, for the classical version of MCES \cite[p.~99, Chapter 5]{sutton2018reinforcement}, we have the stepsize given by 
\begin{equation}\label{eq:step1}
\gamma_i(t)=\frac{1}{n_i(t)}, 
\end{equation}
where $n_i(t)$ is the number of times that state $i$ is selected up to iteration $t$, so that $J_t(i)$ is equal to the average of the simulated cumulative costs for state $i$ up to iteration $t$. This choice of stepsize is non-deterministic, but easy to implement. Alternatively, if we know \textit{a priori}  the probability of selecting each state for updating, we can design a time-varying but deterministic stepsize as 
\begin{equation}\label{eq:step2}
\gamma_t(i)=\frac{\hat{\gamma}_t}{p(i)},     
\end{equation}
where $\hat{\gamma}_t$ is any stepsize satisfying Assumption \ref{as:step}. We prove that both (\ref{eq:step1}) and (\ref{eq:step2}) lead to convergence of the following iteration
\begin{equation}\label{eq:nonuni}
    J_{t+1}(i) = \left\{\begin{aligned}
    &(1-\gamma_t(i))J_t(i) + \gamma_t(i) (J^{\mu_t}(i) + w_t(i)),  \\
    &\qquad\qquad\text{ with probability } p(i), \\
    & J(i),   \qquad\qquad\qquad\text{otherwise}, 
    \end{aligned}\right.
\end{equation}
where $p(i)>0$ is the probability that state $i$ is being selected at each iteration. 

\begin{prop}
Under Assumptions \ref{as:proper} and \ref{as:step}, both choices of stepsizes (\ref{eq:step1}) and (\ref{eq:step2}) lead to convergence of $J_t$  in (\ref{eq:nonuni}) to $J^*$ in probability 1. 
\end{prop}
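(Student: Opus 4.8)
The plan is to treat the two stepsize rules separately, reducing each to results already in hand. For the deterministic component-wise stepsize (\ref{eq:step2}), $\gamma_t(i)=\hat\gamma_t/p(i)$, the key point is that, since state $i$ is selected with probability $p(i)$ at every iteration, the \emph{mean} effective stepsize applied to component $i$ equals $\hat\gamma_t$, a scalar common to all components. Writing $\chi_t(i)$ for the indicator that state $i$ is updated at time $t$, I would recast (\ref{eq:nonuni}) exactly as (\ref{eq:uniform}) was recast as (\ref{eq:J2}): namely $J_{t+1}=(1-\hat\gamma_t)J_t+\hat\gamma_t(J^{\mu_t}+v_t)$, where $v_t(i)=(\chi_t(i)/p(i)-1)(J^{\mu_t}(i)-J_t(i))+(\chi_t(i)/p(i))w_t(i)$ collects the fluctuation of the selection indicators together with $w_t$. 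Since the selection is independent of $w_t$ given $\mathcal{F}_t$ and there are finitely many policies, one gets $\mathbb{E}[v_t\,|\,\mathcal{F}_t]=0$ and $\mathbb{E}[\norm{v_t}^2\,|\,\mathcal{F}_t]\le A+B\norm{J_t}^2$; because $\hat\gamma_t$ satisfies Assumption \ref{as:step}, boundedness of $J_t$ follows from Proposition \ref{prop:boundJ} (Proposition 4.7 of \cite{bertsekas1996neuro}; see also Proposition \ref{prop:sa}), whence $\mathbb{E}[\norm{v_t}^2\,|\,\mathcal{F}_t]$ is dominated by a bounded $\mathcal{F}_t$-adapted sequence, and the whole of Section \ref{sec:main} --- Lemma \ref{lem:ct}, Lemma \ref{lem:main}, the $Z_t/Y_t$ sandwich, Lemma \ref{lem:Z}, and the $\eps\downarrow 0$ passage --- applies verbatim with $\gamma_t$ replaced by $\hat\gamma_t$. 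This disposes of (\ref{eq:step2}).

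The stepsize (\ref{eq:step1}), $\gamma_t(i)=1/n_i(t)$, is random and cannot be extracted as a common deterministic factor, so here I would work within the asynchronous stochastic approximation framework (Chapter 4 of \cite{bertsekas1996neuro}). Since the states are drawn i.i.d.\ with $p(i)>0$, Borel--Cantelli gives that each state is updated infinitely often with probability $1$; along the update instants of state $i$ the stepsizes form the Robbins--Monro sequence $1,\tfrac12,\tfrac13,\dots$; and by the strong law $n_i(t)/t\to p(i)$, so the per-component cumulative stepsizes $\sum_{s\le t}\gamma_s(i)\chi_s(i)$ all grow like $\ln t$, i.e.\ they are asymptotically comparable across components, which is exactly the property that lets the asynchronous iteration converge to the same limit on every coordinate (and exactly what fails for (\ref{eq:J3}) with a common $\gamma_t$). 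With this in place I would replay the proof of Theorem \ref{prop:main} in asynchronous form: first an asynchronous analogue of Lemma \ref{lem:ct}, giving $\limsup_t c_t\le 0$ and $\lambda_t\to 0$; then Lemma \ref{lem:main}(3) --- which needs only Assumption \ref{as:proper} --- yields $J^{\mu_t}\le TJ_t+\eps\Theta\mathbf{1}$ for $t\ge t(\eps)$; multiplying the componentwise inequalities by the nonnegative factors $\gamma_t(i)\chi_t(i)\in[0,1]$ and using monotonicity of $H_\eps$ and of the constant map, one sandwiches $Y_t\le J_t\le Z_t$ between the asynchronous iterations $Z_{t+1}(i)=(1-\gamma_t(i)\chi_t(i))Z_t(i)+\gamma_t(i)\chi_t(i)(H_\eps Z_t(i)+w_t(i))$ and $Y_{t+1}(i)=(1-\gamma_t(i)\chi_t(i))Y_t(i)+\gamma_t(i)\chi_t(i)(J^*(i)+w_t(i))$; the asynchronous stochastic approximation convergence theorem for weighted-maximum-norm contractions then gives $Z_t\to Z_\eps^*$ and $Y_t\to J^*$, and Lemma \ref{lem:Z} together with $\eps\downarrow 0$ finishes the argument as in Theorem \ref{prop:main}. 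An alternative is to note that $|1/n_i(t)-1/(p(i)t)|=O(t^{-3/2}\sqrt{\ln\ln t})$ almost surely, a summable perturbation of the deterministic stepsize $\hat\gamma_t=1/t$ of type (\ref{eq:step2}), and to transfer convergence by a stepsize-perturbation argument; but the asynchronous route is cleaner and also subsumes (\ref{eq:step2}) as the special case $\mathbb{E}[\Gamma_t\,|\,\mathcal{F}_t]=\hat\gamma_t I$, where $\Gamma_t=\diag{\gamma_t(i)\chi_t(i)}$.

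The main obstacle is the asynchronous version of Lemma \ref{lem:ct}. In the synchronous case the estimate is a one-liner, because $T_{\mu_t}$ is affine and the \emph{scalar} stepsize commutes with $P_{\mu_t}$, so $c_{t+1}\le(1-\gamma_t)c_t+\gamma_t v_t$ via the identity $(P_{\mu_t}-I)(I-P_{\mu_t})^{-1}=-I$. With a diagonal (and random) stepsize matrix $\Gamma_t=\diag{\gamma_t(i)\chi_t(i)}$ this cancellation fails: one only obtains $c_{t+1}\le c_t+(P_{\mu_t}-I)\Gamma_t\big((I-P_{\mu_t})^{-1}c_t+w_t\big)=c_t+(P_{\mu_t}-I)(J_{t+1}-J_t)$, which is no longer a textbook contraction recursion in $c_t$, and controlling it requires the asynchronous contraction machinery together with the comparability of the per-component update rates noted above, plus an asynchronous analogue of the boundedness result of Proposition \ref{prop:boundJ}. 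Everything else --- the $Z_t/Y_t$ comparison, the weighted-norm estimates of Lemmas \ref{lem:error} and \ref{lem:Z}, and the $\eps\downarrow 0$ limit --- carries over routinely from Sections \ref{sec:main} and \ref{sec:nonuniform}.
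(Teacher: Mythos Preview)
Your treatment of the deterministic rule (\ref{eq:step2}) is essentially the paper's: recast the component-wise update as a synchronous iteration with the common scalar stepsize $\hat\gamma_t$ and an augmented noise $v_t$, verify the zero-mean and second-moment bounds, invoke boundedness, and then rerun Section~\ref{sec:main} verbatim. Nothing to add there.

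For the random rule (\ref{eq:step1}) your main route and the paper's diverge. You opt for the asynchronous stochastic-approximation framework and correctly flag its sore point: with a diagonal random stepsize matrix $\Gamma_t$, the one-line identity behind Lemma~\ref{lem:ct} collapses, and one is left having to prove an asynchronous analogue of $\limsup_t c_t\le 0$ from scratch. That is a genuine piece of work, not a routine adaptation, and your proposal does not actually carry it out. The paper sidesteps this obstacle entirely by the same device used for (\ref{eq:step2}): it forces a \emph{scalar} stepsize $\hat\gamma_t=1/(t+1)$ out front and writes
\[
J_{t+1}=(1-\hat\gamma_t)J_t+\hat\gamma_t\bigl(J^{\mu_t}+\hat v_t+\hat u_t\bigr),
\]
where all the randomness of $n_i(t)$ is absorbed into two pieces: a zero-mean noise $\hat v_t$ (containing the $(\chi_t(i)/p(i)-1)$ fluctuation and the $w_t$ contribution) and a perturbation $\hat u_t(i)=\bigl(\tfrac{t+1}{n_i(t)}-\tfrac{1}{p(i)}\bigr)\chi_t(i)\bigl(J^{\mu_t}(i)-J_t(i)\bigr)$ satisfying $\lvert\hat u_t(i)\rvert\le\theta_t(\norm{J_t}+1)$ with $\theta_t\to 0$ almost surely by the strong law. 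This lands squarely in the hypotheses of Proposition~\ref{prop:sa} (equivalently Proposition~4.5 of \cite{bertsekas1996neuro}), so the entire synchronous proof of Theorem~\ref{prop:main} applies without modification and without any asynchronous Lemma~\ref{lem:ct}.

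Your dismissed ``alternative'' is in fact much closer to what the paper does than your primary plan, but even there the paper is more economical: it needs only $n_i(t)/t\to p(i)$ from the strong law, not the LIL rate $O(t^{-3/2}\sqrt{\ln\ln t})$, because the stochastic-approximation result already accommodates a non-summable $u_t$ term provided its coefficient $\theta_t$ merely tends to zero. So the trade-off is this: your asynchronous route is conceptually unified (and would subsume (\ref{eq:step2}) as a special case), but leaves the hardest step --- the asynchronous $c_t$ estimate --- unfinished; the paper's reduction to a scalar stepsize is less symmetric but turns (\ref{eq:step1}) into an immediate corollary of machinery already built.
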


\begin{proof}
We first consider the case (\ref{eq:step2}). 
We can equivalently write the update rule as 
\begin{equation}
    \label{eq:newJ}
    J_{t+1} = (1-\hat{\gamma}_t)J_t + \hat{\gamma}_t(J^{\mu_t}+\hat{v}_t),
\end{equation}
where $\hat{\gamma}_t$ is any stepsize satisfying Assumption \ref{as:step} and 
$$
\hat{v}_t(i) = w_t(i) + (\frac{\chi_t(i)}{p(i)}-1)(-J_t(i) + J^{\mu_t}(i) + w_t(i)),
$$
where each $\chi_t(i)$ is a random variable satisfying $\chi_t(i)=1$ if state $i$ is selected and $\chi_t(i)=0$ otherwise. One can easily verify that 
$\mathbb{E}[\hat{v}_t|\mathcal{F}_t]  = 0$ and 
$$
\mathbb{E}[\norm{\hat{v}_t}^2|\mathcal{F}_t] \le \hat{A}+\hat{B}\norm{J_t}^2,
$$    
where $\hat{A}$ and $\hat{B}$ are constants. Similar to Proposition \ref{prop:boundJ}, $J_t$ generated by (\ref{eq:newJ}) is bounded. The same argument as in the proof of Theorem \ref{prop:main} can be used to show that $J_t$ converges to $J^*$. 

Now consider (\ref{eq:step1}). We can write the update rule as
\begin{equation}
    \label{eq:newJ2}
    J_{t+1} = (1-\hat{\gamma}_t)J_t + \hat{\gamma}_t(J^{\mu_t}+\hat{v}_t+\hat{u}_t),
\end{equation}
where $\hat{\gamma}_t=\frac{1}{t+1}$, 
\begin{align*}
\hat{v}_t(i) &= w_t(i) + \bigg(\frac{\chi_t(i)}{p(i)}-1\bigg)(-J_t(i) + J^{\mu_t}(i)) \\
& \qquad + \bigg(\frac{(t+1)\chi_t(i)}{n_i(t)}-1\bigg)w_t(i),
\end{align*}
and
$$
\hat{u}_t(i) = \bigg(\frac{t+1}{n_t(i)}-\frac{1}{p(i)}\bigg)\chi_t(i)(-J_t(i)+J^{\mu_t}(i)).
$$
By the strong law of large numbers,  $\frac{n_i(t)}{t+1}\ra p(i)$ in probability 1 as $t\ra\infty$. It is easy to see that there exists  bounded and $\mathcal{F}_t$-adapted sequences $A_t$ and $B_t$ such that $\mathbb{E}[\hat{v}_t|\mathcal{F}_t]  = 0$ and 
$
\mathbb{E}[\norm{\hat{v}_t}^2|\mathcal{F}_t] \le {A}_t+{B}_t\norm{J_t}^2. 
$  
Moreover, there exists an $\mathcal{F}_t$-adapted random 
sequence $\theta_t$ such that $\theta_t\ra 0$ in probability 1, as $t\ra \infty$ and 
$
\norm{\hat{u}_t}\le \theta_t(\norm{J_t}+1).
$
We can then use the same argument as in the proof of Theorem \ref{prop:main} to show $J_t$ converges to $J^*$ in probability 1, in which we need to use Proposition 4.5 in \cite{bertsekas1996neuro} (see also Proposition \ref{prop:sa} in the Appendix). 
\end{proof}

\begin{rem}
All the convergence results obtained for the undiscounted case in this paper can be extended to the case of temporal difference $TD(\lambda)$ and model-free case ($Q$-learning) without much difficulty. Such results are left out due to the space limit. Interested readers should be able to refer to Sections 4 and 5 in \cite{tsitsiklis2002convergence} and combine the argument there with those in Sections \ref{sec:main}--\ref{sec:nonuniform} of this paper.  
\end{rem}

\section{An illustrative example} \label{sec:ex}

We use a simple example (adapted from \cite[Example 5.11]{bertsekas1996neuro}) to illustrate the convergence behaviours of different variants of MCES. 

\begin{exmp}\label{ex1}
We consider a discounted problem
(i.e. $\alpha<1$) with two states and deterministic transitions show in Figure \ref{fig:ex}. Note that a discounted problem can be turned into an equivalent shortest path problem \cite{bertsekas1995dynamic} by adding a terminal state and modifying the transition probability such that each transition has $1-\alpha$ probability reaching the terminal state. 

The states consist of $S=\set{1,2}$ and the actions $A=\set{l,r}$. The transitions from each state under each action can be seen from Figure \ref{fig:ex}. We define the stage cost $g$ as $g(1,r)=g(2,l)=0$ and $g(1,l)=g(2,r)=1$. Intuitively, for each of the two states, the cost to move is 0 and the cost to stay is 1. This example was used in \cite{bertsekas1996neuro} to show possible divergence of iteration (\ref{eq:J3}) when we do not restrict the frequency of selecting each of the two states for value updates.

There are in total four different policies $\mu_l$, $\mu_r$, $\mu_g$, and $\mu_w$, defined as follows: $\mu_l(1)=\mu_l(2)=l$, $\mu_r(1)=\mu_r(2)=r$, $\mu_g(1)=r,\mu_g(2)=l$, and $\mu_w(1)=l,\mu_w(2)=r$. The intuitive meaning of $\mu_l$ is to always move to the node on the left, while $\mu_r$ is to always move to the right. The optimal policy $\mu_g$ moves from each node to the opposite node, and the ``worst" policy $\mu_w$ always stays at the current node. It is straightforward to compute the cost-to-go value for each policy as follows:
\begin{align*}
J^{\mu_w} = \begin{bmatrix} \frac{1}{1-\alpha} \\ \frac{1}{1-\alpha}\end{bmatrix}, J^{\mu_g} = \begin{bmatrix} 0\\0\end{bmatrix},
J^{\mu_l} = \begin{bmatrix} \frac{1}{1-\alpha} \\ \frac{\alpha}{1-\alpha}\end{bmatrix},  
J^{\mu_r} = \begin{bmatrix} \frac{\alpha}{1-\alpha} \\ \frac{1}{1-\alpha}\end{bmatrix}.
\end{align*}
Furthermore, it can be verified that $\mu_l$ is a greedy policy for a value vector $J$, if $J(1)\le J(2)-\frac{1}{\alpha}$, and  $\mu_r$ is a greedy policy for a value vector $J$, if $J(2)\le J(1)-\frac{1}{\alpha}$. The optimal policy $\mu_g$ is a greedy policy for a value vector $J$, if $\abs{J(2)-J(1)}\le \frac{1}{\alpha}$. This is also depicted in Figure \ref{fig:sim}(d), where the two black dashed lines ($J(2)=J(1)\pm\frac{1}{\alpha}$) separate the domains of the different greedy policies.
Note that the optimal policy $\mu_g$ in this example does not satisfy the optimal policy feed-forward environment assumption in \cite{wang2020convergence}, because both states are revisited under the optimal policy. 
\end{exmp}

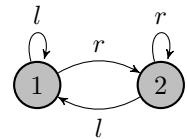
\begin{figure}[ht]
    \centering
\begin{tikzpicture}[node distance=2cm and 1cm,>=stealth',auto, every place/.style={draw}]
    \node [place] (S1) {1};

    \coordinate[node distance=1.1cm,left of=S1] (left-S1);
    \coordinate[node distance=1.1cm,right of=S1] (right-S1);
    
    \node [place] (S2) [right=of S1] {2};

    \path[->] (S1) edge [loop above] node {$l$} ();
    \path[->] (S1) edge [bend left] node {$r$} (S2);
    \path[->] (S2) edge [bend left] node {$l$} (S1);
    \path[->] (S2) edge [loop above] node {$r$} ();
      
\end{tikzpicture}
    \caption{A two-state deterministic system. }%
  \label{fig:ex}
\end{figure}

\begin{figure}[ht]
    \includegraphics[width=0.5\textwidth]{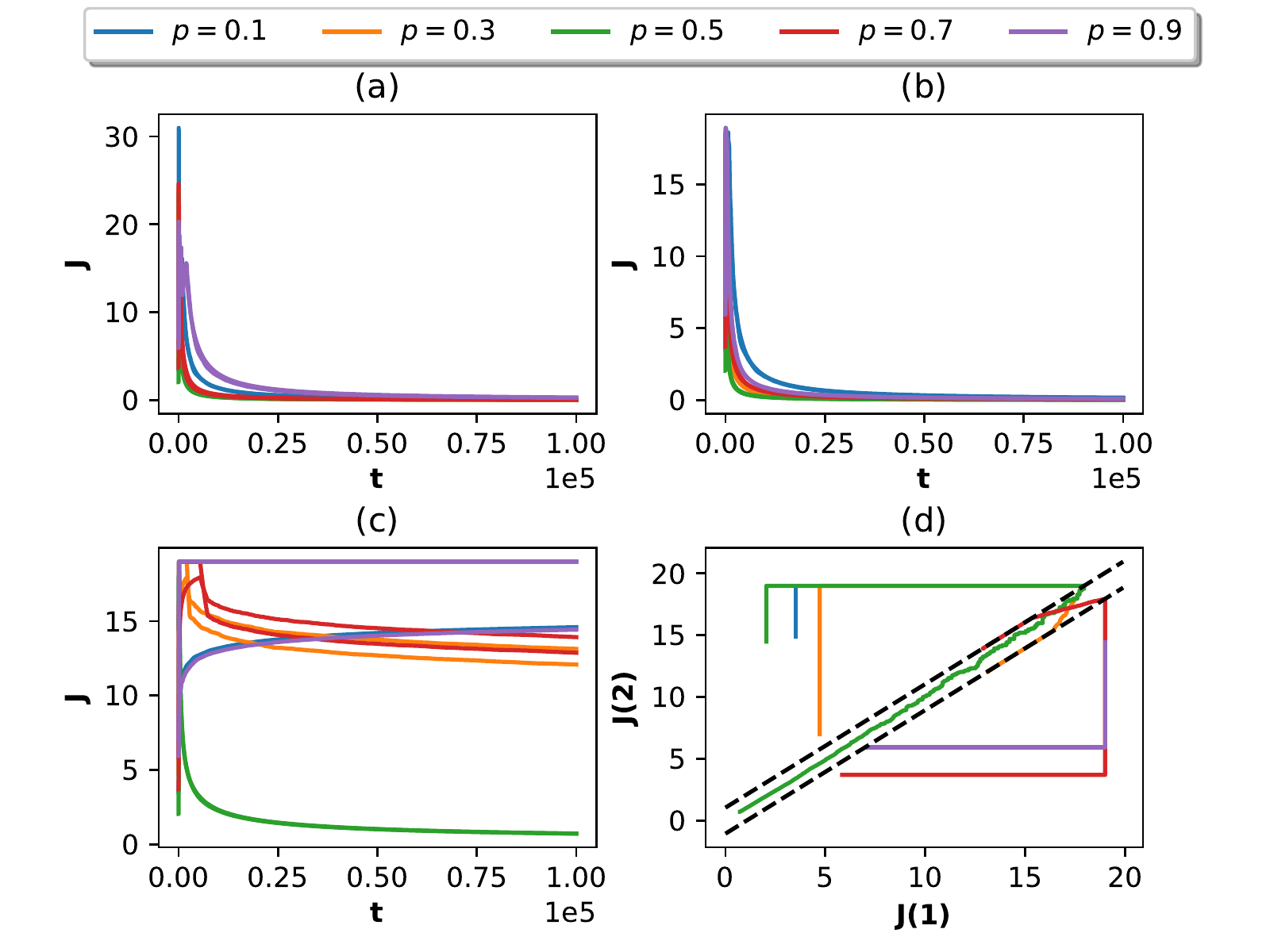}
    \caption{Optimistic policy iteration on Example \ref{ex1} under different choices of stepsize: (a) Iteration (\ref{eq:nonuni} with stepsize choice (\ref{eq:step2}) with $\hat{\gamma}_t=\frac{1}{t+1}$. (b) Iteration (\ref{eq:nonuni} with stepsize choice (\ref{eq:step1}). (c)-(d) Iteration (\ref{eq:J3} with stepsize choice (\ref{eq:step2}) with $\gamma_t=\frac{1}{t+1}$. The black dashed lines in (d) indicate separated domains of the three greedy policies $\mu_l$, $\mu_g$, and $\mu_r$ from top left to bottom right.}
        \label{fig:sim}
\end{figure}

We simulate the optimistic policy iteration (\ref{eq:nonuni}) with different probabilities $p(1)=p$ and $p(2)=1-p$. Figures \ref{fig:sim}(a) and \ref{fig:sim}(b) show convergence of (\ref{eq:nonuni}) using stepsize choices (\ref{eq:step2}) and (\ref{eq:step1}), respectively. Results for iteration (\ref{eq:J3}) are shown in Figure \ref{fig:sim}(c) and \ref{fig:sim}(d). Convergence is observed for (\ref{eq:J3}) only when $p=0.5$ (i.e., in the case of uniform selection).

\section{Conclusions}\label{sec:con}

We investigated the convergence of optimistic policy iteration, also known as Monte Carlo Exploring Starts (MCES), for the stochastic shortest path problem. These results complement known partial results on this topic and thereby help settle this long-standing open question. 

There are at least two possible extensions of this work. First, the results in this paper assume that only the initial state of a simulated trajectory is picked for value updates. It would be interesting to prove convergence for the first-visit and every-visit versions of MCES \cite{sutton2018reinforcement}, in which the first and every state visited on the trajectory, respectively, will be selected for value updating. Second, as pointed out in \cite{tsitsiklis2002convergence}, it would be interesting, and perhaps very challenging, to generalize the results to situations where function approximations are used to  represent values.  

\begin{ack}                               %
The author gratefully acknowledges the NSERC DG, CRC, and Ontario ERA programs for funding support. 
\end{ack}

\bibliographystyle{plain}        %
\bibliography{autosam}           %

\appendix

\section{Martingale Convergence Theorem}    %

The following version of supermartingale convergence theorem stated in \cite{bertsekas1996neuro}, without a proof, is widely used in convergence analysis of stochastic approximation. For completeness, we provide a self-contained proof. %

\begin{thm}\cite[Proposition 4.2]{bertsekas1996neuro}\label{thm:sct}
Let $\set{X_t}$, $\set{Y_t}$, and $\set{Z_t}$ be three sequences of random variables that are adapted to a filtration $\set{\mathcal{F}_t}$. Suppose that the following conditions hold:
\begin{enumerate}
    \item $X_t$, $Y_t$, and $Z_t$ are nonnegative for all $t\ge 0$. 
    \item $\mathbb{E}[Y_{t+1}\,\vert\, \F_t]\le Y_t - X_t + Z_t$. 
    \item $\sum_{t=0}^{\infty}{Z_t}<\infty$ holds in probability 1. 
\end{enumerate}
Then $\sum_{t=0}^{\infty}{X_t}<\infty$ holds in probability 1 and $Y_t$ converges in probability 1 to a nonnegative random variable $Y_\infty$. 
\end{thm}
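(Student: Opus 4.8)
The plan is to prove this by reducing it to the classical nonnegative supermartingale convergence theorem (the Robbins--Siegmund / Doob special case with $Z_t \equiv 0$), which I may take as known, and then handling the extra nonnegative ``drift'' term $Z_t$ via a change of variables that absorbs it. The standard trick is to define
$$
W_t = Y_t + \sum_{s=0}^{t-1} X_s - \sum_{s=0}^{t-1} Z_s,
$$
so that condition (2) gives $\mathbb{E}[W_{t+1}\mid\mathcal{F}_t] \le W_t$, i.e.\ $\{W_t\}$ is a supermartingale. However, $W_t$ need not be nonnegative (the $-\sum Z_s$ term can be large negative), so I cannot apply the nonnegative version directly. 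To fix this, I would instead work with
$$
W_t' = Y_t + \sum_{s=0}^{t-1} X_s,
$$
which is nonnegative by condition (1), and note that $\mathbb{E}[W_{t+1}'\mid\mathcal{F}_t] \le W_t' + Z_t$.

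The main obstacle, and the crux of the argument, is that the condition $\sum_{t=0}^\infty Z_t < \infty$ holds only \emph{almost surely}, not in a way that gives a uniform bound on $\mathbb{E}[\sum Z_t]$; so $\{W_t'\}$ is a supermartingale only up to an integrable-in-the-limit perturbation, and I cannot immediately conclude $L^1$-boundedness. The standard way around this is a \textbf{localization / stopping-time argument}: for each constant $m > 0$, define the stopping time
$$
\tau_m = \inf\Big\{ t \ge 0 : \sum_{s=0}^{t} Z_s > m \Big\},
$$
and consider the stopped process $W_{t\wedge\tau_m}'$. On $\{t < \tau_m\}$ the accumulated drift is bounded by $m$, so one checks that $W_{t\wedge\tau_m}' + (\text{leftover drift})$ — more precisely the process $\widetilde W_t^{(m)} = W_{t\wedge\tau_m}' - \sum_{s=0}^{(t\wedge\tau_m)-1} Z_s + m$ — is a \emph{nonnegative} supermartingale (nonnegativity uses that the subtracted sum is at most $m$ before $\tau_m$, plus nonnegativity of $Y$ and $X$; one must be slightly careful about the single jump across $\tau_m$, which is controlled because $Z_{\tau_m-1}$ itself is... not bounded — so the cleaner choice is to bound via $\sum_{s=0}^{t\wedge\tau_m - 1} Z_s \le m$ by definition of $\tau_m$, giving $\widetilde W_t^{(m)} \ge W_{t\wedge\tau_m}' - m + m = W_{t\wedge\tau_m}' \ge 0$). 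Then the classical nonnegative supermartingale convergence theorem applies to $\widetilde W_t^{(m)}$: it converges a.s.\ to a finite limit.

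Finishing the argument: on the event $\{\tau_m = \infty\}$, we have $W_{t\wedge\tau_m}' = W_t'$ and the subtracted drift converges (it is bounded and monotone), so $W_t'$ converges a.s.\ to a finite limit on $\{\tau_m = \infty\}$; since $W_t' = Y_t + \sum_{s<t} X_s$ with both summands nondecreasing contributions, convergence of $W_t'$ forces both $\sum_{s=0}^\infty X_s < \infty$ and $Y_t$ convergent on that event. Now condition (3), $\sum Z_t < \infty$ a.s., means $\bigcup_{m} \{\tau_m = \infty\}$ has probability $1$ (if $\sum Z_t < \infty$ then $\tau_m = \infty$ for all large enough $m$), so these conclusions hold a.s. Finally $Y_\infty := \lim_t Y_t$ is nonnegative as an a.s.\ limit of nonnegative random variables. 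I would organize the write-up as: (i) recall/cite the nonnegative case; (ii) the $W_t'$ and stopping-time construction; (iii) verify the stopped process is a nonnegative supermartingale; (iv) patch the events $\{\tau_m=\infty\}$ together using condition (3); (v) read off the two conclusions. The one place needing genuine care is step (iii) — getting the constant shift right so that nonnegativity survives the stopping, and making sure the argument does not secretly require $\mathbb{E}[Z_t]<\infty$.
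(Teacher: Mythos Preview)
Your approach is essentially the paper's: form the supermartingale $W_t = Y_t + \sum_{s<t} X_s - \sum_{s<t} Z_s$ (your $\widetilde W_t^{(m)}$ is exactly $W_{t\wedge\tau_m}+m$, so the detour through $W_t'$ is cosmetic), stop it at the first time $\sum_{s\le t} Z_s$ exceeds a level $m$ so that the shifted stopped process is a nonnegative supermartingale, apply the classical convergence theorem, and take the union over $m$ using condition~(3). One correction to your final extraction step: $Y_t$ is \emph{not} nondecreasing, so ``both summands nondecreasing'' is wrong; the valid argument is that $Y_t\ge 0$ gives $\sum_{s<t}X_s\le W_t'$, hence the monotone partial sum is bounded and converges, and then $Y_t=W_t'-\sum_{s<t}X_s$ converges as a difference of convergent sequences (the paper instead re-runs the whole localization argument with $X_t\equiv 0$ to get convergence of $Y_t$ first, then deduces $\sum X_t<\infty$).
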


The book \cite{bertsekas1996neuro} cited \cite{ash1972real} and \cite{neveu1975discrete} for this result. However, the references \cite{ash1972real,neveu1975discrete} do not seem to contain an exact statement of this result, nor a proof. Here we provide a proof of this result for completeness, based on a standard version of the supermartingale convergence theorem below. 

\begin{thm}\label{thm:sct0}
Let $\set{Y_t}$ be a supermartingale bounded in $L^1$, i.e. $\sup_{t\ge 0} \E{\abs{Y_t}}<\infty$. Then $Y_t$ converges in probability 1 to a random variable $Y_\infty$ and $Y_\infty\in L^1$. 
\end{thm}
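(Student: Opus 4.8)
The plan is to prove this by Doob's classical upcrossing argument, which is the standard route to almost-sure convergence of supermartingales bounded in $L^1$. First I would fix two rationals $a<b$ and, for each $N$, let $U_N[a,b]$ denote the number of upcrossings of the interval $[a,b]$ completed by the finite path $Y_0,Y_1,\ldots,Y_N$. To count these upcrossings I would introduce the $\set{0,1}$-valued process $H_t$ that switches on the first time the path drops below $a$ and switches off the first time it subsequently rises above $b$ (repeating thereafter); the associated martingale transform, or ``betting strategy'', $(H\cdot Y)_N=\sum_{t=1}^{N}H_t(Y_t-Y_{t-1})$ is the central object.

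The key step is Doob's upcrossing inequality. Since each $H_t$ is $\F_{t-1}$-measurable and nonnegative and $\set{Y_t}$ is a supermartingale, the transform $(H\cdot Y)$ is again a supermartingale with $(H\cdot Y)_0=0$, so $\E{(H\cdot Y)_N}\le 0$. On the other hand, each completed upcrossing contributes at least $b-a$ to $(H\cdot Y)_N$, while the final incomplete excursion contributes at least $-(Y_N-a)^-$, giving the pathwise bound $(b-a)U_N[a,b]\le (H\cdot Y)_N + (Y_N-a)^-$. Taking expectations yields $(b-a)\E{U_N[a,b]}\le \E{(Y_N-a)^-}\le \abs{a}+\E{\abs{Y_N}}$. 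Because $\sup_{t\ge 0}\E{\abs{Y_t}}<\infty$, the right-hand side is bounded uniformly in $N$, so by monotone convergence $\E{U_\infty[a,b]}<\infty$ and hence $U_\infty[a,b]<\infty$ with probability 1.

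From here the conclusion follows by a countable union. On the event $\set{\liminf_{t\ra\infty} Y_t < \limsup_{t\ra\infty} Y_t}$ there exist rationals $a<b$ with $\liminf_{t\ra\infty} Y_t<a<b<\limsup_{t\ra\infty} Y_t$, which forces $U_\infty[a,b]=\infty$; since this has probability zero for each fixed pair $(a,b)$ and the rationals are countable, the full event has probability zero. Thus $Y_t$ converges with probability 1 to a limit $Y_\infty$ taking values in $[-\infty,+\infty]$. Finally, Fatou's lemma gives $\E{\abs{Y_\infty}}\le\liminf_{t\ra\infty}\E{\abs{Y_t}}\le\sup_{t\ge0}\E{\abs{Y_t}}<\infty$, so $Y_\infty$ is finite almost surely and lies in $L^1$.

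I expect the main obstacle to be establishing the upcrossing inequality cleanly: in particular verifying that the betting process $H_t$ is genuinely predictable (each $H_t$ being $\F_{t-1}$-measurable) and that the transform of a supermartingale by a nonnegative bounded predictable process is itself a supermartingale. These are precisely the points where the hypothesis $\E{Y_{t+1}\mid\F_t}\le Y_t$ is used, and everything after the inequality is a routine union-and-Fatou argument.
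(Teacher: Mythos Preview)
Your proposal is correct and is exactly the classical Doob upcrossing argument. The paper does not supply its own proof of this theorem; it simply cites \cite[p.~109]{williams1991probability}, which is precisely the proof you outline (predictable betting process, upcrossing inequality, countable union over rationals, Fatou for $L^1$).
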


A proof of this result can be found, e.g., in \cite[p.~109]{williams1991probability}. A variant of  Theorem \ref{thm:sct0} can be proved immediately. 

\begin{cor}\label{cor:sct}
Let $\set{Y_t}$ be a supermartingale. If $\sup_{t\ge 0} \E{Y_t^-}<\infty$, where $Y_t^-$ is the negative part of $Y_t$ defined by $Y_t=\max(0,-Y_t)$. Then $Y_t$ converges in probability 1 to a random variable $Y_\infty\in L^1$. 
\end{cor}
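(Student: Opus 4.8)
The plan is to deduce Corollary~\ref{cor:sct} from Theorem~\ref{thm:sct0} by reducing a general supermartingale with $L^1$-bounded negative part to one that is bounded in $L^1$. The key observation is that for a supermartingale $\set{Y_t}$, the expectations $\E{Y_t}$ are non-increasing in $t$, so $\E{Y_t}\le\E{Y_0}$ for all $t\ge 0$. Combining this with $\abs{Y_t}=Y_t+2Y_t^-$ gives $\E{\abs{Y_t}}=\E{Y_t}+2\E{Y_t^-}\le\E{Y_0}+2\sup_{s\ge 0}\E{Y_s^-}$, which is finite under the hypothesis. Hence $\sup_{t\ge 0}\E{\abs{Y_t}}<\infty$, and Theorem~\ref{thm:sct0} applies directly to conclude that $Y_t\to Y_\infty$ a.s. with $Y_\infty\in L^1$.

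First I would record the monotonicity of $\E{Y_t}$: the supermartingale property $\E{Y_{t+1}\mid\F_t}\le Y_t$ together with the tower property yields $\E{Y_{t+1}}\le\E{Y_t}$, and one should note that $\E{Y_0}$ is finite — this needs a word of justification, since a priori a supermartingale only requires each $Y_t\in L^1$, and indeed the standing assumption in the definition of a supermartingale is that every $Y_t$ is integrable, so $\E{Y_0}<\infty$ is automatic. Second, I would write $\abs{Y_t}=Y_t^++Y_t^-$ and $Y_t=Y_t^+-Y_t^-$, hence $\abs{Y_t}=Y_t+2Y_t^-$, take expectations, and bound the right-hand side uniformly in $t$ as above. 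Third, invoke Theorem~\ref{thm:sct0}.

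I do not expect a serious obstacle here; the only subtlety worth flagging is the bookkeeping around integrability — one must make sure $\E{Y_t}$ is well-defined (not $-\infty$) before asserting monotonicity, which is exactly why the hypothesis $\sup_t\E{Y_t^-}<\infty$ (equivalently, each $Y_t^-\in L^1$) is the right one to impose, and why it dovetails with the standing integrability assumption built into the word ``supermartingale.'' I would also remark, as the corollary's statement does implicitly, that the displayed definition of $Y_t^-$ in the statement should read $Y_t^-=\max(0,-Y_t)$; this is a typo in the statement that I would silently use in the correct form throughout the proof.
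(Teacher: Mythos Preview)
Your proposal is correct and follows exactly the same route as the paper: write $\abs{Y_t}=Y_t+2Y_t^-$, use the supermartingale property to get $\E{Y_t}\le\E{Y_0}$, combine with $\sup_t\E{Y_t^-}<\infty$ to obtain $\sup_t\E{\abs{Y_t}}<\infty$, and invoke Theorem~\ref{thm:sct0}. Your additional remarks about integrability and the typo in the definition of $Y_t^-$ are accurate and do not change the argument.
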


\begin{proof}
Write $\abs{Y_t}=Y_t+2Y_{t}^-$. Since $\set{Y_t}$ is a supermartingale, $\E{Y_t}\le \E{Y_0}$ for all $t\ge 0$. Hence,  $\sup_{t\ge 0} \E{Y_t^-}<\infty$ implies $\sup_{t\ge 0} \E{\abs{Y_t}}<\infty$. The conclusion follows from Theorem \ref{thm:sct0}. 
\end{proof}
Clearly, if $\set{Y_t}$ is a nonnegative supermartingale, then $Y_t^-\equiv 0$ and $Y_t$ converges in probability 1 according to Corollary \ref{cor:sct}. 

To prove Theorem \ref{thm:sct} based on Theorem \ref{thm:sct0}, we also need the following lemma, which says that a stopped supermartingale is still a supermartingale. 

\begin{lem}\label{lem:stopped}
Let $\set{Y_t}$ be a supermartingale and $T$ be a stopping time. Then the stopped process $X^T:= X_{T\wedge t}$, $t=0,1,2,\cdots$, is still a supermartingale. 
\end{lem}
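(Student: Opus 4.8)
The plan is to prove this by the classical optional-stopping-type argument for discrete-time supermartingales. First I would verify integrability: for each fixed $t$, the stopped variable $Y_{T\wedge t}$ can be written as a finite sum $Y_{T\wedge t} = \sum_{k=0}^{t} Y_k \mathbf{1}_{\{T=k\}} + Y_t \mathbf{1}_{\{T>t\}}$, and since $\{T=k\}$ and $\{T>t\}$ are measurable and each $Y_k$ is integrable (as part of a supermartingale), $Y_{T\wedge t}$ is integrable. Also $Y_{T\wedge t}$ is $\mathcal{F}_t$-measurable because $T\wedge t$ takes values in $\{0,1,\dots,t\}$, each event $\{T\wedge t = k\}$ lies in $\mathcal{F}_k\subseteq\mathcal{F}_t$, and $Y_k$ is $\mathcal{F}_k$-measurable; hence the stopped process is adapted.

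Next, the heart of the argument: I would write the one-step difference of the stopped process as
$$
Y_{T\wedge(t+1)} - Y_{T\wedge t} = \mathbf{1}_{\{T>t\}}\,(Y_{t+1}-Y_t),
$$
which holds because on $\{T\le t\}$ the stopped process is frozen and on $\{T>t\}$ it advances by exactly one genuine step. Since $T$ is a stopping time, $\{T>t\} = \{T\le t\}^c \in \mathcal{F}_t$, so $\mathbf{1}_{\{T>t\}}$ is $\mathcal{F}_t$-measurable and can be pulled out of the conditional expectation:
$$
\E{Y_{T\wedge(t+1)} - Y_{T\wedge t}\,\big|\,\mathcal{F}_t} = \mathbf{1}_{\{T>t\}}\,\E{Y_{t+1}-Y_t\,\big|\,\mathcal{F}_t} \le 0,
$$
where the inequality uses the supermartingale property of $\set{Y_t}$ together with the fact that $\mathbf{1}_{\{T>t\}}\ge 0$. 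Rearranging gives $\E{Y_{T\wedge(t+1)}\,|\,\mathcal{F}_t}\le Y_{T\wedge t}$, which is precisely the supermartingale inequality for the stopped process.

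The argument is essentially routine; the only point requiring a little care — and the step I would flag as the main obstacle — is the measurability/integrability bookkeeping, in particular making sure $\{T>t\}\in\mathcal{F}_t$ is invoked correctly when pulling the indicator out of the conditional expectation, and confirming that the decomposition of $Y_{T\wedge(t+1)}-Y_{T\wedge t}$ as $\mathbf{1}_{\{T>t\}}(Y_{t+1}-Y_t)$ is valid pathwise (it is, by checking the two cases $T(\omega)\le t$ and $T(\omega)\ge t+1$ separately). Once these are in place the supermartingale inequality for $X^T$ follows immediately, completing the proof.
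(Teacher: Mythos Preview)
Your argument is correct and is exactly the standard discrete-time optional-stopping computation: write $Y_{T\wedge(t+1)}-Y_{T\wedge t}=\mathbf{1}_{\{T>t\}}(Y_{t+1}-Y_t)$, pull the $\mathcal{F}_t$-measurable indicator outside the conditional expectation, and invoke the supermartingale property. The paper does not supply its own proof of this lemma at all; it simply cites Williams (p.~99) and Neveu (p.~32), and your proof is essentially the one given in those references.
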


A statement and proof of this result can be found, e.g., in \cite[p.~99]{williams1991probability} or \cite[p.~32]{neveu1975discrete}. 

\textbf{Proof of Theorem \ref{thm:sct}}

For each $t$, define 
$$
W_t = Y_t +  \sum_{s=0}^{t-1} X_s  - \sum_{s=0}^{t-1} Z_s. 
$$
It is straightforward to verify by condition (2) of Theorem \ref{thm:sct} that
\begin{align*}
 \E{W_{t+1}\,\vert\,\mathcal{F}_t} &= \E{Y_{t+1} +  \sum_{s=0}^{t} X_s  - \sum_{s=0}^{t} Z_s\,\vert\,\mathcal{F}_t}\\
 & = \E{Y_{t+1}\,\vert\,\mathcal{F}_t} +  \sum_{s=0}^{t} X_s  - \sum_{s=0}^{t} Z_s \\
 & \le Y_t - X_t + Z_t +  \sum_{s=0}^{t} X_s  - \sum_{s=0}^{t} Z_s\\
 & = Y_t +  \sum_{s=0}^{t-1} X_s  - \sum_{s=0}^{t-1} Z_s = W_t. 
\end{align*}
Hence, $\set{W_t}$ is a supermartingale. 
For each $k\ge 0$, define a stopping time $T_k$ by
$$
T_k = \inf\set{t\ge 0:\,\sum_{s=0}^{t} Z_s \ge k}. 
$$
Then the stopped process $W_{T_k\wedge t}$, $t=0,1,2,\cdots$, according to Lemma \ref{lem:stopped}, is also a supermartingale. Furthermore, by the definition of $T_k$, we have $\sum_{s=0}^{T_k\wedge t-1} Z_s \le k$, %
which implies $W_{T_k\wedge t}\ge -\sum_{s=0}^{T_k\wedge t-1} Z_s \ge -k$. Hence $k+W_{T_k\wedge t}$, $t=0,1,2,\cdots$, is nonnegative supermartingale for each $k\ge 0$. By Corollary \ref{cor:sct}, $\lim_{t\ra\infty}(k+W_{T_k\wedge t})$ exists in probability 1 for each $k\ge 0$. 

Consider the event 
$$
\Omega_{W}^{k}=\set{\omega\in\Omega:\,\lim_{t\ra\infty}(k+W_{T_k\wedge t})\text{ exists}}.
$$
Then %
$P(\Omega_W^k)=1$ for all $k\ge 0$. Let $\Omega_W=\cap_{k=0}^{\infty}\Omega_W^k$. By continuity of probability, we have $P(\Omega_W)=1$. 
Consider also the event 
$$
\Omega_Z=\set{\omega\in\Omega:\,\sum_{t=0}^{\infty}Z_t(\omega)<\infty}.
$$
Then $P(\Omega_Z)=1$. It follows that $P(\Omega_Z\cap\Omega_W)=1$. 

Consider any $\omega\in \Omega_Z\cap \Omega_W$. Since $\omega\in\Omega_Z$, there exists some $k\ge 0$ such that  $\sum_{t=0}^{\infty}Z_t(\omega)< k.$ Hence, for this $k$, we have $T_k(\omega)=\infty$. Since $\omega\in\Omega_W$, $\lim_{t\ra\infty}W_t(\omega)=\lim_{t\ra\infty}(k+W_{T_k(\omega)\wedge t}((\omega)))$ exists. We have proved that $W_t$ converges in probability 1. By the definition of $W_t$ and the fact that $\sum_{t=0}^\infty Z_t<\infty$ in probability 1, $Y_t+\sum_{s=0}^{t-1} X_s$ converges in probability 1. Since $X_t$ is nonnegative, condition (2) of Theorem also holds with $X_t\equiv 0$. Hence, repeating the argument above with $X_t\equiv 0$ would show that $Y_t$ converges in probability 1. This in turn implies $\sum_{t=0}^{\infty} X_t<\infty$ in probability 1. \hfill $\blacksquare$

\section{Stochastic Approximation}         %
Based on the supermartingale convergence theorem, in this section, we provide a more straightforward proof of the convergence result on stochastic approximation arguments we used in this paper. 

Consider a sequence $\set{J_t}$ generated using the update rule
\begin{equation}
    \label{eq:JF}
    J_{t+1}(i) = (1-\gamma_t(i))J_t(i) + \gamma_t(H_tJ_t (i)+ w_t(i) + u_t(i)), 
\end{equation}
where $\gamma_t$, $H_t$, $w_t$, and $u_t$ satisfy the following. 
\begin{assum}\label{as:JF}
We have
\begin{enumerate}
    \item $\sum_{t=0}^\infty \gamma(i)=\infty$ and $\sum_{t=0}^\infty \gamma^2(i)<\infty$ for all $i$. 
    \item There exists a positive vector $\theta\in\Real^n$, a vector  $J^*\in\Real^n$, and sclars $\beta\in[0,1)$ and $D\ge 0$ such that $\norm{H_t J -J^*}_{\theta}\le \beta\norm{J-J^*}_\theta+D$. We also assume that $H_tJ_t$ is $\mathcal{F}_t$-adapted. 
    \item There exist constants $A$ and $B$ such that 
    \[\E{w_t(i)\,\vert\,\mathcal{F}_t}=0,\quad \E{w_t^2(i)\,\vert\,\mathcal{F}_t}\le A_t+B_t\norm{J_t}^2\] for all $i$ and $t$, where $\norm{\cdot}$ is any norm and $A_t$ and $B_t$ are $\mathcal{F}_t$-adapted and bounded. 
    \item There exists an $\mathcal{F}_t$-adapted random sequence $\theta_t$ such that $\lim_{t\ra\infty}\theta_t=0$ in probability 1 and $\abs{u_t(i)}\le \theta_t(\norm{J_t}+1)$ for all $i$ and $t$, where $\norm{\cdot}$ is any norm. We also assume that $u_t$ is $\mathcal{F}_t$-adapted.
\end{enumerate}
\end{assum}

The following result is essentially Propositions 4.7 and 4.5 in \cite{bertsekas1996neuro} combined together. Here we provide a more direct proof from the supermartingale convergence theorem. 

\begin{prop} \label{prop:sa}
Let $J_t$ be generated by (\ref{eq:JF}). Suppose that Assumption \ref{as:JF} holds. Then
\begin{enumerate}
\item $J_t$ is bounded in probability 1, and 
\item $J_t$ converges to $J^*$ in probability 1 if $D=0$.
\end{enumerate} 
\end{prop}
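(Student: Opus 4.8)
The plan is to establish the two claims in sequence, first boundedness and then convergence, following the pattern used throughout Chapter 4 of \cite{bertsekas1996neuro} but deriving everything directly from the supermartingale convergence theorem (Theorem \ref{thm:sct}). The central idea for boundedness is a rescaling argument. Suppose, toward a contradiction, that $\norm{J_t}$ is unbounded on a set of positive probability. Fix a large constant and, along a subsequence, rescale the iterates by their running maximum so that the rescaled sequence $\hat{J}_t$ stays bounded by $1$; the recursion for $\hat{J}_t$ then has a homogeneous contraction term (the $D$ and $J^*$ terms, together with $u_t$ via Assumption \ref{as:JF}(4), become lower-order after rescaling) plus a martingale noise whose conditional second moment is controlled by $A_t + B_t \norm{J_t}^2$, which after division by the squared running maximum is summable because $\sum \gamma_t^2 < \infty$. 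Applying Theorem \ref{thm:sct} to a suitable nonnegative quadratic-type Lyapunov function $\norm{\hat{J}_t - J^*/(\text{scale})}_\theta^2$ (or more simply $\norm{\hat J_t}_\theta^2$ with the contraction factor $\beta<1$ providing the needed $-X_t$ term) shows the rescaled sequence must actually shrink, contradicting that it attains its maximum. This is the standard ``boundedness by contradiction and scaling'' argument, and it yields claim (1).

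Once $J_t$ is known to be bounded almost surely, the noise bound $\E{w_t^2(i)\mid\mathcal{F}_t}\le A_t+B_t\norm{J_t}^2$ becomes a genuine (random, but almost surely finite, and in fact dominated) bound, so the $w_t$ term behaves like a classical martingale-difference noise with summable squared weights. For claim (2), assume $D=0$. Consider the Lyapunov function $V_t = \norm{J_t - J^*}_\theta^2$. Expanding the recursion \eqref{eq:JF} componentwise, using $\norm{H_t J_t - J^*}_\theta \le \beta \norm{J_t - J^*}_\theta$ (Assumption \ref{as:JF}(2) with $D=0$), the conditional mean-zero property of $w_t$, the second-moment bound, and the bound $\abs{u_t(i)}\le\theta_t(\norm{J_t}+1)$ with $\theta_t\to0$, one obtains an inequality of the form
\begin{equation*}
\E{V_{t+1}\mid\mathcal{F}_t} \le (1 - c\,\underline{\gamma}_t)\,V_t + \gamma_t^2 K_t + \gamma_t \eta_t,
\end{equation*}
where $c>0$ comes from $\beta<1$, $K_t$ is bounded (using boundedness of $J_t$, $A_t$, $B_t$), and $\eta_t\to0$ absorbs the $u_t$ contribution and cross terms. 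One then splits into the regime where $\eta_t$ is small: for $t$ large, $\gamma_t\eta_t$ can be absorbed into the contraction term except on the set $\{V_t \le \text{const}\cdot\eta_t/\underline\gamma_t\}$, or more cleanly, for any $\epsilon>0$ eventually $\E{V_{t+1}\mid\mathcal{F}_t}\le V_t - c\underline\gamma_t V_t \mathbf{1}_{\{V_t>\epsilon\}} + (\text{summable}) + \gamma_t^2 K_t$. Applying Theorem \ref{thm:sct} gives that $V_t$ converges almost surely and that $\sum \underline\gamma_t V_t\mathbf{1}_{\{V_t>\epsilon\}}<\infty$; since $\sum\underline\gamma_t=\infty$, the limit of $V_t$ cannot exceed $\epsilon$, and as $\epsilon$ is arbitrary, $V_t\to0$, i.e. $J_t\to J^*$ almost surely.

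A technical point deserving care is that the stepsizes are componentwise, $\gamma_t(i)$, so the contraction estimate and the Lyapunov recursion must be handled component by component or with a common lower bound $\underline\gamma_t=\min_i\gamma_t(i)$ and upper bound $\bar\gamma_t=\max_i\gamma_t(i)$, both of which inherit Assumption \ref{as:JF}(1); in the MCES application the ratios $\gamma_t(i)/\gamma_t(j)$ stay bounded, which is what makes this work. I expect the main obstacle to be the boundedness proof: making the rescaling argument fully rigorous requires carefully choosing the stopping/scaling times and verifying that the rescaled recursion genuinely satisfies the hypotheses of Theorem \ref{thm:sct} uniformly, which is the delicate part of Proposition 4.7 in \cite{bertsekas1996neuro}. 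The convergence step, given boundedness, is comparatively routine: it is the standard stochastic-approximation supermartingale argument with a contraction, the only extra wrinkle being the asymptotically vanishing perturbation $u_t$, which is handled by the $\epsilon$-splitting described above together with the fact that $\theta_t\to0$ lets us absorb it once $t$ is large.
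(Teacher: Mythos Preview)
Your proposal has a genuine gap in both halves, stemming from the same source: you treat $V_t=\norm{J_t-J^*}_\theta^2$ (a squared weighted \emph{maximum} norm) as if it admits the quadratic supermartingale expansion familiar from Euclidean-norm stochastic approximation. It does not. After expanding each component of \eqref{eq:JF} and taking conditional expectations you obtain, for every $i$, a bound of the form
\[
\E{(J_{t+1}(i)-J^*(i))^2/\theta(i)^2\,\big\vert\,\F_t}\le (1-c\gamma_t(i))\,\norm{J_t-J^*}_\theta^2+\gamma_t(i)^2 K_t,
\]
but to recover a supermartingale for $V_{t+1}=\max_i(\cdot)$ you would need $\E{\max_i X_i\,\vert\,\F_t}\le\max_i\E{X_i\,\vert\,\F_t}$, and that inequality goes the wrong way. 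Passing to a sum $\sum_i(\cdot)$ does not help either, because the contraction hypothesis $\norm{H_tJ-J^*}_\theta\le\beta\norm{J-J^*}_\theta$ bounds each component by the global max, so the cross term $(J_t(i)-J^*(i))(H_tJ_t(i)-J^*(i))$ is coupled across components and fails to produce a clean $-c\gamma_t \tilde V_t$ drift. The same obstruction hits your rescaled boundedness argument, since there too you invoke a ``quadratic-type Lyapunov function $\norm{\hat J_t}_\theta^2$.'' Your fallback of working with $\underline\gamma_t=\min_i\gamma_t(i)$ and $\bar\gamma_t=\max_i\gamma_t(i)$ also oversteps Assumption~\ref{as:JF}(1), which places no constraint on the ratios $\gamma_t(i)/\gamma_t(j)$.

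The paper sidesteps this by never forming a Lyapunov function on the max norm. Instead it runs two auxiliary \emph{componentwise} recursions, a pure-noise process $V_t(i)$ and a signal process $Y_t(i)$ driven by the scalar $G_t=(\beta+\eps)\sup_{s\le t}\norm{J_s}+(D+\eps)$, proves by induction the sandwich $Y_t-V_t\le J_t\le Y_t+V_t$, and shows via Lemmas~\ref{lem:v} and~\ref{lem:y} that $V_t(i)/G_t\to0$ and $\limsup|Y_t(i)/G_t|\le1$ for each $i$ separately (here the supermartingale theorem is applied to the scalar $(V_t(i)/G_t)^2$, where no max is involved). This yields $\norm{J_t}\le(1+2\eps)G_t$, which unwinds to a self-referential bound on $\sup_s\norm{J_s}$ with coefficient $(1+2\eps)(\beta+\eps)<1$, giving boundedness directly rather than by contradiction. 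Convergence when $D=0$ is then obtained by iterating this same sandwich argument with successively tighter constants $C_k=(\beta+\eta)^kC_0$, not by a one-shot Lyapunov inequality. If you want to salvage your outline, replace the max-norm Lyapunov by this per-component comparison structure.
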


Since the analysis with the weighted maximum norm $\norm{\cdot}_{\theta}$ is very similar to that of the maximum $\norm{\cdot}_{\infty}$. In the following proofs, we only consider the maximum norm and denote it by $\norm{\cdot}$. 

\begin{lem}\label{lem:v}
Consider 
\[
V_{t+1}(i)=(1-\gamma_t(i))V_t(i)+\gamma_t(i)w_t(i),\quad t\ge t_0\ge 0.
\]
\begin{enumerate}
    \item If (3) of Assumption \ref{as:JF} holds with $B=0$ (or $J_t$ is bounded), then $V_t$ converges to 0 in probability 1. 
    \item Let $G_t$ be a nondecreasing $\mathcal{F}_t$-adapted scalar random sequence such that $G_t\ge \mu\norm{J_t}+\nu$ for all $t\ge t_0$, where $\mu$ and $\nu$ are positive constants. Then $\frac{V_t}{G_t}$ converges to 0 in probability 1. 
\end{enumerate}
\end{lem}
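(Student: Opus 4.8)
The plan is to handle the two parts of Lemma~\ref{lem:v} separately, both by recognizing the iteration for $V_t$ (or for a suitably rescaled version of it) as a nonnegative supermartingale-type process and applying Theorem~\ref{thm:sct}. For part (1), assume first that $J_t$ is bounded, say $\norm{J_t}\le K$ in probability $1$, so that by (3) of Assumption~\ref{as:JF} we have $\E{w_t^2(i)\mid\mathcal{F}_t}\le A_t + B_t K^2 =: \sigma_t^2$, a bounded $\mathcal{F}_t$-adapted sequence. I would then compute the conditional expectation of $V_{t+1}^2(i)$ given $\mathcal{F}_t$: expanding the square and using $\E{w_t(i)\mid\mathcal{F}_t}=0$ gives
\begin{align*}
\E{V_{t+1}^2(i)\mid\mathcal{F}_t} &= (1-\gamma_t(i))^2 V_t^2(i) + \gamma_t^2(i)\E{w_t^2(i)\mid\mathcal{F}_t}\\
&\le V_t^2(i) - (2\gamma_t(i)-\gamma_t^2(i))V_t^2(i) + \gamma_t^2(i)\sigma_t^2.
\end{align*}
Setting $Y_t = V_t^2(i)$, $X_t = (2\gamma_t(i)-\gamma_t^2(i))V_t^2(i)$ (nonnegative once $\gamma_t(i)\le 2$, which holds eventually since $\sum\gamma_t^2(i)<\infty$), and $Z_t = \gamma_t^2(i)\sigma_t^2$ (which has finite sum in probability~1, since $\sigma_t^2$ is bounded and $\sum\gamma_t^2(i)<\infty$), Theorem~\ref{thm:sct} shows $Y_t$ converges and $\sum_t X_t<\infty$ in probability~1. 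Since $\sum_t \gamma_t(i)=\infty$ and $\gamma_t(i)\to 0$, the relation $\sum_t\gamma_t(i)V_t^2(i)<\infty$ forces $\liminf_t V_t^2(i)=0$; combined with convergence of $V_t^2(i)$ this yields $V_t(i)\to 0$ in probability~1, for each $i$, hence $V_t\to 0$ in the maximum norm. The case $B=0$ is identical with $\sigma_t^2 = A_t$ directly, without needing boundedness of $J_t$.

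\textbf{Part (2).} For the rescaled process, the natural idea is to show $W_t(i) := V_t(i)/G_t$ is driven to zero. The difficulty is that $G_t$ is itself a random, time-varying sequence sitting in the denominator, so one cannot simply write a clean recursion. The plan is to exploit that $G_t$ is nondecreasing, so $G_{t+1}\ge G_t$, and write
\[
\frac{V_{t+1}(i)}{G_{t+1}} \le \frac{(1-\gamma_t(i))V_t(i) + \gamma_t(i)w_t(i)}{G_t}
\]
when the numerator is nonnegative (and a symmetric bound when it is negative), reducing matters to controlling $V_t(i)/G_t$. Then I would estimate the conditional second moment of $V_{t+1}(i)/G_{t+1}$: using $G_{t+1}\ge G_t$ and $\E{w_t(i)\mid\mathcal{F}_t}=0$,
\begin{align*}
\E{\Big(\tfrac{V_{t+1}(i)}{G_{t+1}}\Big)^2\,\Big|\,\mathcal{F}_t} &\le \frac{(1-\gamma_t(i))^2 V_t^2(i) + \gamma_t^2(i)\E{w_t^2(i)\mid\mathcal{F}_t}}{G_t^2}.
\end{align*}
Now by hypothesis $G_t\ge\mu\norm{J_t}+\nu$, so $\norm{J_t}^2\le (G_t/\mu)^2$ and $1\le (G_t/\nu)^2$, giving $\E{w_t^2(i)\mid\mathcal{F}_t}\le A_t + B_t\norm{J_t}^2 \le (A_t/\nu^2 + B_t/\mu^2)G_t^2 =: \kappa_t G_t^2$ with $\kappa_t$ bounded and $\mathcal{F}_t$-adapted. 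Substituting,
\[
\E{\Big(\tfrac{V_{t+1}(i)}{G_{t+1}}\Big)^2\,\Big|\,\mathcal{F}_t} \le (1-\gamma_t(i))^2\Big(\tfrac{V_t(i)}{G_t}\Big)^2 + \gamma_t^2(i)\kappa_t,
\]
which has exactly the same structure as in part (1). Applying Theorem~\ref{thm:sct} with $Y_t = (V_t(i)/G_t)^2$, $X_t = (2\gamma_t(i)-\gamma_t^2(i))(V_t(i)/G_t)^2$, $Z_t = \gamma_t^2(i)\kappa_t$, and arguing as before with $\sum_t\gamma_t(i)=\infty$, gives $V_t(i)/G_t\to 0$ in probability~1 for each $i$, hence in the maximum norm.

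\textbf{Main obstacle.} The routine part is the supermartingale bookkeeping, which is mechanical. The genuinely delicate point is part (2): justifying that one may pass the random, nondecreasing sequence $G_t$ through the recursion and obtain a clean inequality of supermartingale type. The key realizations that make it work are that $G_t$ is nondecreasing (so replacing $G_{t+1}$ by $G_t$ in a denominator only increases the bound), that $G_t$ is $\mathcal{F}_t$-adapted (so it behaves as a constant under $\E{\cdot\mid\mathcal{F}_t}$), and that the lower bound $G_t\ge\mu\norm{J_t}+\nu$ is precisely what is needed to absorb the $A_t + B_t\norm{J_t}^2$ growth of the noise variance into a bounded multiple of $G_t^2$. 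Once these are in place, the argument collapses to the same template as part (1). I would also remark at the end that the weighted maximum norm case is handled identically, as noted in the text preceding the lemma.
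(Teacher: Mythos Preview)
Your proposal is correct and follows essentially the same approach as the paper: square the iterate, use $G_{t+1}\ge G_t$ to replace the denominator, absorb the noise variance via $G_t\ge\mu\norm{J_t}+\nu$, and apply Theorem~\ref{thm:sct} to $Y_t=(V_t(i)/G_t)^2$. The only cosmetic differences are that the paper proves (2) first and recovers (1) as the special case $G_t\equiv 1$, and that the paper simplifies $(1-\gamma_t(i))^2\le 1-\gamma_t(i)$ for large $t$ before invoking the supermartingale theorem, whereas you keep the full $2\gamma_t(i)-\gamma_t^2(i)$ term; neither change affects the argument.
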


\begin{proof}
We prove (2) first. The proof for (1) is a special case. We have
\begin{align*}
V_{t+1}^2(i) &= (1-\gamma_t(i))^2V_t^2(i) + 2\gamma_t(i)(1-\gamma_t(i))V_t(i)w_t(i) \\
& \qquad\qquad + \gamma_t^2(i)w_t^2(i).
\end{align*}
Since $G_t$ is a nondecreasing sequence, we obtain
\begin{align*}
\frac{V_{t+1}^2(i)}{G^2_{t+1}} & \le \frac{V_{t+1}^2(i)}{G^2_{t}} \\
& =  (1-\gamma_t(i))^2\frac{V_t^2(i)}{G_t^2} + 2\gamma_t(i)(1-\gamma_t(i))\frac{V_t(i)w_t(i)}{G_t^2} \\
& \qquad\qquad + \gamma_t^2(i)\frac{w_t^2(i)}{G_t^2}.
\end{align*}
Taking condition expectation from both sides and noticing that $\E{w_t(i)\,\vert\,\mathcal{F}_t}=0$ and $V_t$ and $G_t$ are adapted to $\mathcal{F}_t$ and independent of $w_t$, we have
\begin{align*}
\E{\frac{V_{t+1}^2(i)}{G^2_{t+1}}\,\bigg\vert\,\mathcal{F}_t} & \le  (1-\gamma_t(i))^2\frac{V_t^2(i)}{G_t^2}  + \gamma_t^2(i)\frac{\E{w_t^2(i)}}{G_t^2}\\
&\le (1-\gamma_t(i))^2\frac{V_t^2(i)}{G_t^2}  + \gamma_t^2(i) \frac{A_t+B_t\norm{J_t}^2}{G_t^2} \\
& \le (1 - 2\gamma_t(i) + \gamma_t^2(i))\frac{V_t^2(i)}{G_t^2} + \gamma_t^2(i) K_t,
\end{align*}
for some $\mathcal{F}_t$-adapted and bounded $K_t$, where we used (3) of Assumption \ref{as:JF} and the fact that $G_t\ge \mu\norm{J_t}+\nu$ for all $t$. 

Since $\gamma_t(i)\ra 0$ as $t\ra\infty$, for $t$ sufficiently large, we have $\gamma_t^2(i)\le \gamma_t(i)$ and 
\begin{align*}
\E{\frac{V_{t+1}^2(i)}{G^2_{t+1}}\,\bigg\vert\,\mathcal{F}_t}  \le \frac{V_t^2(i)}{G_t^2} - \gamma_t(i)\frac{V_t^2(i)}{G_t^2} +  \gamma_t^2(i) K_t. 
\end{align*}
Let $Y_t= \frac{V_t^2(i)}{G_t^2}$, $X_t=\gamma_t(i)\frac{V_t^2(i)}{G_t^2}$, and $Z_t=\gamma_t^2(i) K_t$. Since $K_t$ is bounded, we have $\sum_{t}^\infty\gamma_t^2(i) K_t<\infty$ in probability 1. Then the conditions of Theorem \ref{thm:sct}  are satisfied for $t$ sufficiently large. By Theorem \ref{thm:sct}, $Y_t= \frac{V_t^2(i)}{G_t^2}$ converges in probability 1 and $\sum_{t=0}^\infty \gamma_t(i)\frac{V_t^2(i)}{G_t^2}<\infty$ in probability 1, which in turn implies $Y_t= \frac{V_t^2(i)}{G_t^2}$ converges in probability 1 to 0, because otherwise we would have $\sum_{t=0}^\infty \gamma_t(i)\frac{V_t^2(i)}{G_t^2}=\infty$ since $\sum_{t=0}^\infty \gamma_t(i)=\infty$. To prove (1), note that if $B=0$ (or $J_t$ is bounded), we can set $G_t=1$ and prove convergence of $V_t$ in the same way. 
\end{proof}
The first part of the above lemma is Corollary 4.1 in \cite{bertsekas1996neuro}, for which we provide a more direct proof here. The second part appears to be new.

\begin{lem}
\label{lem:y}
Consider  
\[
Y_{t+1}(i)=(1-\gamma_t(i))Y_t(i)+\gamma_t(i)G_t,\quad t\ge t_0,
\]
where $t_0\ge 0$ and $G_t$ is a positive nondecreasing scalar random sequence. Then $\limsup_{t\ra \infty}     \abs{\frac{Y_{t}(i)}{G_{t}}}\le 1$.
\end{lem}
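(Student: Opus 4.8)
\textbf{Proof proposal for Lemma \ref{lem:y}.}

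The plan is to treat the recursion for $Y_t(i)$ as a convex-combination update whose ``target'' is the nondecreasing sequence $G_t$, and to show that $Y_t(i)$ cannot stay far above $G_t$ in the limit. Fix the component $i$ and write $\gamma_t$ for $\gamma_t(i)$; recall from (1) of Assumption \ref{as:JF} that $\gamma_t\to 0$ and $\sum_t\gamma_t=\infty$, so without loss of generality $\gamma_t\in[0,1)$ for $t\ge t_0$ (after possibly enlarging $t_0$). The key observation is the following comparison: since $G_t$ is nondecreasing, for any $t\ge t_0$ we have $G_t\le G_{t+1}$, so dividing the recursion by $G_{t+1}$ gives
\begin{align*}
\frac{Y_{t+1}(i)}{G_{t+1}} &= (1-\gamma_t)\frac{Y_t(i)}{G_{t+1}} + \gamma_t\frac{G_t}{G_{t+1}}\\
&\le (1-\gamma_t)\frac{Y_t(i)}{G_{t+1}} + \gamma_t,
\end{align*}
and, in the regime where $Y_t(i)\ge 0$, $\frac{Y_t(i)}{G_{t+1}}\le\frac{Y_t(i)}{G_t}$, so $\frac{Y_{t+1}(i)}{G_{t+1}}\le (1-\gamma_t)\frac{Y_t(i)}{G_t}+\gamma_t$.

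First I would set $r_t=\frac{Y_t(i)}{G_t}$ and establish the one-sided bound $r_{t+1}\le\max\{r_t,1\}$ whenever $r_t\ge 0$; combined with a similar argument handling the sign, this shows $\{r_t\}$ is eventually bounded above. Then, to sharpen ``bounded above'' to $\limsup r_t\le 1$, I would argue by contradiction: suppose $\limsup_t r_t = 1+2\eta$ for some $\eta>0$. Whenever $r_t\ge 1+\eta$ we have $r_{t+1}\le (1-\gamma_t)r_t+\gamma_t = r_t-\gamma_t(r_t-1)\le r_t-\gamma_t\eta$, i.e. the ratio strictly decreases by at least $\gamma_t\eta$. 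Since $\sum_t\gamma_t=\infty$, the ratio cannot remain above $1+\eta$ on an infinite tail, and the decrements force infinitely many returns below $1+\eta$; but once below $1+\eta$, the bound $r_{t+1}\le\max\{r_t,1\}+(\text{nothing, since the target is }\le 1)$ keeps it from jumping back above $1+2\eta$ in one step for $t$ large (because $\gamma_t\to 0$ makes single-step increments vanish — more precisely $r_{t+1}-r_t\le\gamma_t(1-r_t)\le\gamma_t$). This is the standard ODE/supermartingale-style squeezing argument for deterministic (here, pathwise) approximations, and I expect the bookkeeping of ``drops by $\gamma_t\eta$ when high, rises by at most $\gamma_t$ always'' to be the main technical point.

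The main obstacle is making the ``cannot stay above $1+\eta$'' step rigorous in the presence of the nondecreasing random $G_t$ rather than a constant target: one must be careful that the inequality $\frac{Y_t(i)}{G_{t+1}}\le\frac{Y_t(i)}{G_t}$ used above requires $Y_t(i)\ge 0$, so I would first dispose of the possibility $Y_t(i)<0$ for infinitely many $t$ (in which case $r_t<0<1$ and there is nothing to prove along that subsequence), and then run the contradiction argument only on the tail where $Y_t(i)$ stays nonnegative, or else on the excursions of $r_t$ into $[1+\eta,\infty)$ — each such excursion starts from a value at most slightly above $1+\eta$ (since steps are small) and is driven down at rate $\gamma_t\eta$, and the divergence of $\sum\gamma_t$ over the (necessarily infinitely many, for a limsup) excursions yields the contradiction. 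Everything here is pathwise, so no martingale machinery is needed; the statement and its proof are purely about the deterministic recursion satisfied along each sample path.
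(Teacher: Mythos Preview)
Your approach is workable but takes a longer route than the paper. The paper's proof starts from the same recursion you wrote down but immediately passes to absolute values: since $G_t>0$ and (for large $t$) $1-\gamma_t(i)\ge 0$, the triangle inequality gives $|Y_{t+1}(i)|\le (1-\gamma_t(i))|Y_t(i)|+\gamma_t(i)G_t$, and then dividing by $G_{t+1}\ge G_t$ yields
\[
\left|\frac{Y_{t+1}(i)}{G_{t+1}}\right|\le (1-\gamma_t(i))\left|\frac{Y_t(i)}{G_t}\right|+\gamma_t(i).
\]
This single step eliminates the sign difficulty you flag as the ``main obstacle'': working with $|Y_t(i)|$ (which is nonnegative) instead of $Y_t(i)$ makes the inequality $\frac{|Y_t(i)|}{G_{t+1}}\le\frac{|Y_t(i)|}{G_t}$ automatic, so there is no need to split into cases or track excursions. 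From there, rather than your limsup-contradiction argument, the paper simply compares $|Y_t(i)/G_t|$ with the deterministic scalar iteration $Z_{t+1}=(1-\gamma_t(i))Z_t+\gamma_t(i)$ started at $Z_{t_0}=|Y_{t_0}(i)/G_{t_0}|$; since $Z_t-1=\prod_{s=t_0}^{t-1}(1-\gamma_s(i))(Z_{t_0}-1)\to 0$ (using $\sum_t\gamma_t(i)=\infty$), one gets $\limsup_t|Y_t(i)/G_t|\le\lim_t Z_t=1$ in one line. Your excursion bookkeeping (drops by $\gamma_t\eta$ when high, rises by at most $\gamma_t$) would also reach the conclusion, but the absolute-value-plus-comparison-sequence argument is shorter and sidesteps all the sign casework.
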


\begin{proof}
We have
\begin{align*}
    \abs{\frac{Y_{t+1}(i)}{G_{t+1}}} &\le  \abs{1-\gamma_t(i)} \abs{\frac{Y_t(i)}{G_{t+1}}} + \gamma_t(i)\\
    & \le (1-\gamma_t(i)) \abs{\frac{Y_t(i)}{G_{t}}} + \gamma_t(i), 
\end{align*}
for $t$ sufficiently large such that $\gamma_t(i)<1$. Consider the iteration 
$$
Z_{t+1}=(1-\gamma_t(i))Z_t + \gamma_t(i), 
$$
with $Z_{t_0}=\frac{Y_{t_0}}{G_{t_0}}$. It is easy to verify that $Z_t\ra 1$, as $t\ra\infty$ (this can in fact be seen as a special case of Lemma \ref{lem:v} with $V_t=Z_t-1$ and $w_t=0$). By comparison, $\frac{Y_t}{G_t}\le Z_t$ for all $t\ge t_0$. Hence, $\limsup_{t\ra \infty}     \abs{\frac{Y_{t}(i)}{G_{t}}}\le 1$.
\end{proof}
\textbf{Proof of Proposition \ref{prop:sa}}  

Note that all the estimates on random variables in this proof are meant to hold in probability 1. 

Fix an $\eta\in (0,1)$ such that $\beta+2\eta<1$. Since $\theta_t\ra0$, as $t\ra \infty$, for any $\eps>0$, there exists $t_0=t_0(\eps)$ such that $\theta_t<\eps$ for all $t\ge t_0$. Since $\gamma_t(i)\ra 0$, we can also assume $t_0$ is picked sufficiently large such that $\gamma_t(i)<1$. 

By Assumption \ref{as:JF}, we have
\begin{align}
\norm{H_t J_t} + \theta_t (\norm{J_t}+1) &\le\beta\norm{J_t} + D + \eps (\norm{J_t}+1) \notag\\
& \le (\beta+\eps)\sup_{0\le s \le t}\norm{J_s} + (D+\eps)\notag\\
& = G_t,\quad\forall t\ge t_0,\label{eq:Ht}
\end{align}
where $G_t:=(\beta+\eps)\sup_{0\le s \le t}\norm{J_s} + (D+\eps)$. Then $G_t$ satisfies the assumptions in Lemmas \ref{lem:v} and \ref{lem:y}. Now consider
$$
Y_{t+1}(i) = (1-\gamma_t(i)) Y_t(i) + \gamma_t(i) G_t,\quad t\ge t_0, 
$$
and 
\[
V_{t+1}(i)=(1-\gamma_t(i))V_t(i)+\gamma_t(i)w_t(i),\quad t\ge t_0. 
\]
Set $Y_{t_0}=J_{t_0}$ and $V_{t_0}=0$.

\textbf{Claim:} We have $Y_t - V_t\le J_t \le Y_t + V_t$ for all $t\ge t_0$. 

\textbf{Proof of the claim:} We prove it by induction. For $t=t_0$, we have $J_{t_0}=Y_{t_0}+V_{t_0}$. Assume the inequality holds for some $t\ge t_0$. By (\ref{eq:Ht}), we have  
\begin{align*}
    J_{t+1}(i) &\le (1-\gamma_t(i))J_t(i) + \gamma_t(i)(H_t J_t(i) + w_t(i) + u_t(i)) \\
    & \le (1-\gamma_t(i))(Y_t(i) + V_t(i)) + \gamma_t(i) G_t +  \gamma_t(i)w_t(i)\\
   & = Y_{t+1}(i) + V_{t+1}(i). 
\end{align*}
The other half of the inequality similarly holds.  \hfill $\blacksquare$

By Lemma \ref{lem:v}, we have $\frac{V_t(i)}{G_t}\ra 0$, as $t\ra \infty$. By Lemma \ref{lem:y}, $\limsup_{t\ra \infty} \abs{\frac{Y_t(i)}{G_t}}\le 1$. For the same $\eps>0$, there exists $T\ge t_0$ such that $\abs{V_t(i)}\le \eps G_t$ and $\abs{Y_t(i)}\le (1+\eps) G_t$ for all $t\ge T$. Hence the above claim implies that $\norm{J_t}\le (1+2\eps)G_t$ for all $t\ge T$. In view of the definition $G_t$ from (\ref{eq:Ht}), we obtain
\begin{equation}\label{eq:Jbound0}
\norm{J_t} \le (1+2\eps) [(\beta+\eps)\sup_{0\le s \le t}\norm{J_s} + (D+\eps)],
\end{equation}
for all $t\ge T$. Fix $\eps\in (0,1)$ sufficiently small such that $\mu:=(1+2\eps)(\beta+\eps)<1$. It follows that
$$
\sup_{0\le s \le t}\norm{J_s} \le \mu \sup_{0\le s \le t}\norm{J_s} + C,\quad \forall t\ge 0,
$$
where $C= \max((1+2\eps) (D+\eps),\sup_{0\le s \le T}\norm{J_s})$. Hence, we obtain an explicit bound for $J_t$ as 
\begin{equation}\label{eq:Jbound}
\norm{J_t}\le \sup_{0\le s \le t}\norm{J_s} \le \frac{C}{1-\mu},\quad \forall t\ge 0.    
\end{equation}
Note that $C$ is a random variable. This proves item (1). 

We now prove item (2). Let $C_0=\frac{C}{1-\mu}$ and $T_0=T$. Then $C_0$ is $\mathcal{F}_T$-measurable. For any $\eps_0\in (0,\eps)$, define $G_t=(\beta+\eps)C_0+\eps$ for $t\ge T_0$. Then (\ref{eq:Ht}) holds with this $G_t$. By repeating the argument preceding (\ref{eq:Jbound0}), we can show that there exists some $T_1\ge T_0$ such that 
\begin{equation}\label{eq:Jbound2}
\norm{J_t} \le (1+2\eps_0) [(\beta+\eps_0)C_0 + \eps_0)],\quad \forall t\ge T_1. 
\end{equation}
We can pick $\eps_0$ sufficiently small such that 
$$
\norm{J_t} \le (\beta+\eta)C_0,\quad \forall t\ge T_1. 
$$
We can inductively show that there exists a sequence $\set{T_k}$ such that
$$
\norm{J_t} \le (\beta+\eta)^k C_0,\quad \forall t\ge T_k. 
$$
Hence $J_t\ra 0$ as $t\ra \infty$. This proves item (2).  \hfill $\blacksquare$.

\end{document}